\newtheorem{theorem}{Theorem}[section]
\newtheorem*{theorem*}{Theorem}
\newtheorem{lemma}[theorem]{Lemma}
\newtheorem{proposition}[theorem]{Proposition}
\newtheorem{corollary}[theorem]{Corollary}
\newtheorem{conjecture}[theorem]{Conjecture}
\newtheorem*{conjecture*}{Conjecture}
\theoremstyle{definition}
\theoremstyle{remark}
\newtheorem{remark}[theorem]{Remark}
\renewcommand{\hat}[1]{\widehat{#1}}
\newcommand{\al}{\alpha}
\newcommand{\Aut}{{\rm Aut}\,}
\newcommand{\Z}{\mathbb{Z}}
\newcommand{\C}{\mathbb{C}}
\newcommand{\la}{\lambda}
\def\C{{\mathbb C}}
\def\Z{{\mathbb Z}}
\def\1{{\bf 1}}
\def\la{{\langle}}
\def\ra{{\rangle}}
\def \pf{\noindent {\bf Proof: \,}}
\def \h{\mathfrak{h}}
\def \g{\mathfrak{g}}
\def \h{\mathfrak{h}}
\def \k{\mathfrak{f}}
\def \s{\mathfrak{s}}
\def \com{\mathrm{Com}}
\numberwithin{equation}{section}
\begin{document}

\title[Uniqueness of holomorphic VOAs]{$\mathbb{Z}_2$-orbifold construction associated with $(-1)$-isometry and uniqueness of holomorphic vertex operator algebras of central charge 24}

\author{Kazuya Kawasetsu}
\address{ Institute of Mathematics, Academia Sinica, Taipei 10617, Taiwan}
\email{kawasetsu@gate.sinica.edu.tw}
\author{Ching Hung Lam}
\address{Institute of Mathematics, Academia Sinica, Taipei 10617, Taiwan}
\email{chlam@math.sinica.edu.tw}
\author{Xingjun Lin}
\address{Institute of Mathematics, University of Tsukuba, Tsukuba, Ibaraki 305-8571, Japan}
\email{linxingjun88@126.com}

\thanks{C.\,H. Lam was partially supported by MoST grant 104-2115-M-001-004-MY3 of Taiwan}
\thanks{X. Lin is an "Overseas researchers under Postdoctoral Fellowship of Japan Society for the Promotion of Science", and is supported by JSPS Grant No. 16F16020.}

\begin{abstract}
The vertex operator algebra structure of a strongly regular holomorphic vertex operator algebra $V$ of central charge $24$ is proved to be uniquely determined by the Lie algebra structure of its weight one space $V_1$ if $V_1$ is a Lie algebra of the type
$A_{1,4}^{12}$, $B_{2,2}^6$,  $B_{3,2}^4$, $B_{4,2}^3$,  $B_{6,2}^2$,
$B_{12,2}$,
$D_{4,2}^2B_{2,1}^4$,  $D_{8,2}B_{4,1}^2$,
$A_{3,2}^4A_{1,1}^4$,  $D_{5,2}^2A_{3,1}^2$,  $D_{9,2}A_{7,1}$,
$C_{4,1}^4$
or $D_{6,2}B_{3,1}^2C_{4,1}$.
\end{abstract}
\subjclass[2010]{17B69}
\keywords{Vertex operator algebra, orbifold construction, Niemeier lattices}
\maketitle

\section{Introduction \label{intro}}

The classification of holomorphic vertex operator algebras (VOAs) of central charge $24$ is one of important problems in the theory of VOAs and conformal field theories.
In 1993, Schellekens~\cite{Sch} obtained a partial classification of holomorphic VOAs of central charge $24$
 and showed that there are $71$ possible Lie algebra structures for the weight one spaces of holomorphic VOAs of central charge $24$ (see also \cite{EMS}).
 Recently, holomorphic VOAs
of central charge $24$ corresponding to all $71$ Lie algebras in Schellekens' list
 have been explicitly constructed (see \cite{EMS}, \cite{LS15}, \cite{LLin} and~\cite{SS}).
To finish the classification of holomorphic VOAs
of central charge $24$, it remains to show that there is a unique  holomorphic VOA
of central charge $24$ corresponding to each Lie algebra in Schellekens' list.
Motivated by the fact that
the unimodular lattices of rank $24$ (Niemeier lattices) are determined by their root
systems, it is believed that the following conjecture is true.
\begin{conjecture}\label{conj}
The VOA structure of a~strongly regular holomorphic VOA $V$ of central charge
$24$ is uniquely determined by its weight one Lie algebra $V_1$.
\end{conjecture}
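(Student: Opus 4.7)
The plan is to attack Conjecture~\ref{conj} one Lie algebra at a time via a reverse-orbifold strategy. For each $\g$ in Schellekens' list, the existence of a holomorphic VOA $V^{\sharp}(\g)$ with $V^{\sharp}(\g)_1 \cong \g$ is now known from~\cite{EMS, LS15, LLin, SS}, and in the majority of cases (including the thirteen in the abstract) $V^{\sharp}(\g)$ is realized as a $\Z_2$-orbifold of some Niemeier lattice VOA $V_N$ along a lift of the $(-1)$-isometry of $N$. The task is therefore to show that any strongly regular holomorphic VOA $V$ of central charge $24$ with $V_1 \cong \g$ is isomorphic to $V^{\sharp}(\g)$ by inverting this construction.

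Concretely I would proceed as follows. First, from the Lie-algebraic data of $V_1$ I would single out a canonical involution $\sigma \in \Aut(V)$ that mimics the $(-1)$-lift: pick an element $h$ in a Cartan subalgebra of $V_1$ so that $\sigma := \exp(2\pi \sqrt{-1}\, h_0)$ has order two and the predicted trace on $V_1$. Second, form the $\Z_2$-orbifold $\tilde{V}$ of $V$ along $\sigma$, and use modular invariance à la Zhu together with the character of the $\sigma$-twisted module $V(\sigma)$ to compute the weight-one Lie algebra $\tilde{V}_1$. Third, match $\tilde{V}_1$ with the weight-one Lie algebra of a specific Niemeier lattice VOA $V_N$; by uniqueness of the Niemeier lattices and known uniqueness results for lattice-type holomorphic VOAs, conclude $\tilde{V} \cong V_N$. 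Finally, exploit the fact that the $\Z_2$-orbifold construction is an involution on the relevant class of holomorphic VOAs: applying it to $V_N$ with the dual $(-1)$-lift recovers $V$ on the one hand and $V^{\sharp}(\g)$ on the other, so $V \cong V^{\sharp}(\g)$.

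The main technical obstacle lies in step one: from the isomorphism type of $V_1$ alone one has to pin down the $\Aut(V)$-conjugacy class of $\sigma$, so that the orbifold output $\tilde{V}$ is predictable. For the thirteen Lie algebras in the abstract, the Cartan-toral data together with the levels of the simple ideals force a unique candidate $\sigma$, and modular invariance forces $\tilde{V}_1$ to match a unique Niemeier root system, which is why the argument succeeds. For the remaining cases of Schellekens' list, either no suitable order-two $\sigma$ with the right trace exists, or the predicted $\tilde{V}_1$ is compatible with more than one Niemeier lattice, requiring either a higher-order orbifold or a finer analysis at the level of $V^{\sigma}$-modules inside $\tilde{V}$; these obstructions are precisely why the full Conjecture~\ref{conj} remains open.
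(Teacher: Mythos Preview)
Your overall strategy matches the paper's reverse-orbifold method, but there is a genuine gap in your final step. You write that ``applying [the $\Z_2$-orbifold] to $V_N$ with the dual $(-1)$-lift recovers $V$ on the one hand and $V^{\sharp}(\g)$ on the other.'' The orbifold involution indeed gives back $V$ when you orbifold $V_N$ by the \emph{dual automorphism} $a=a_{V,\sigma_h}$ defined by $a|_{V^{\sigma_h}}=1$ and $a|_{V^{\mathrm{T}}(\sigma_h)_\Z}=-1$; and orbifolding $V_N$ by the $(-1)$-lift $\theta$ gives $V^{\sharp}(\g)$. But you have not explained why $a$ is conjugate to $\theta$ in $\Aut(V_N)$, and without this the two outputs need not coincide. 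This conjugacy is precisely condition~(c) in the paper's Theorem~\ref{sec:unique1}, and verifying it is the heart of the argument: the paper proves (Theorems~\ref{main1} and~\ref{main2}) that for each of the thirteen pairs $(\g,\k)$, \emph{any} order-$2$ automorphism $\mu$ of $V_{N(\k)}$ with $(V_{N(\k)})_1^\mu\cong\g$ is conjugate to $\theta$, via a case-by-case analysis using Helgason's classification of involutions of simple Lie algebras together with Proposition~\ref{sec:lift}.

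You have also misplaced the main technical obstacle. It is not pinning down the conjugacy class of $\sigma_h$ in $\Aut(V)$: since $h$ is chosen explicitly in the Cartan of $V_1\cong\g$, the inner automorphism $\sigma_h$ is already canonically determined by the Lie-algebraic data (and in the paper's choices $\sigma_h$ acts \emph{trivially} on $V_1$). The genuinely delicate points on the $V$-side are (i)~showing that $\sigma_h$ has order exactly $2$ on all of $V$ rather than just on $V_1$ (Lemma~\ref{condition}, which for $\g=B_{12,2}$ and $D_{8,2}B_{4,1}^2$ requires a nontrivial branching analysis), and (ii)~controlling the lowest conformal weight of the twisted module so that the dimension formula $\dim\tilde V_1=2\dim V_1+24$ applies (Lemma~\ref{sec:difference}). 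Your appeal to ``modular invariance \`a la Zhu'' is too coarse here; the paper instead bounds $d(\bm\lambda)=w(\bm\lambda)-\ell(\bm\lambda)$ uniformly over $\bm P_\g$ to force $V^{\mathrm T}(\sigma_h)_{1/2}=0$.
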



Up to now, Conjecture \ref{conj} has been verified in the followings cases:
(i)~the weight one Lie algebras of the $24$ Niemeier lattice VOAs (24 cases) \cite{DM04};
(ii)~$A_{1,2}^{16}$ and $E_{8,2}B_{8,1}$\cite{LS15a};
(iii)~$E_{6,3}G_{2,1}^3$, $A_{2,3}^6$ and $A_{5,3}D_{4,3}A_{1,1}^3$ \cite{LS16};
(iv)~$A_{8,3}A_{2,1}^2$ \cite{LLin}.
In this paper, we will consider $13$ other Lie algebras in Schellekens' list. More precisely, we will prove the following result.
\begin{theorem}\label{sec:cor1}
The structure of a~strongly regular holomorphic VOA $V$ of central charge $24$
is uniquely determined
by its weight one Lie algebra $V_1$ if $V_1$ has the type
\begin{align*}
&A_{1,4}^{12},\quad  B_{2,2}^6,\quad  B_{3,2}^4, \quad B_{4,2}^3,\quad  B_{6,2}^2,\quad  B_{12,2}, \quad
D_{4,2}^2B_{2,1}^4,\quad  D_{8,2}B_{4,1}^2, \\
&A_{3,2}^4A_{1,1}^4,\quad  D_{5,2}^2A_{3,1}^2,\quad  D_{9,2}A_{7,1}, \quad
C_{4,1}^4
\quad \mathrm{or}\quad
D_{6,2}B_{3,1}^2C_{4,1}.
\end{align*}
\end{theorem}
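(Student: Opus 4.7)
The plan is to apply the $\mathbb{Z}_2$-orbifold construction (developed in the earlier sections of the paper and encoded in the title's ``$(-1)$-isometry'') to the given holomorphic VOA $V$ and reduce each of the $13$ cases to one that has already been settled, most naturally to a Niemeier lattice VOA (covered by case~(i) of Dong--Mason~\cite{DM04}) or to one of the orbifold cases in~\cite{LS15a,LS16,LLin}. For each weight-one Lie algebra $\mathfrak{g} = V_1$ on the list, the structure of $\mathfrak{g}$ suggests a canonical inner automorphism $\sigma \in \Aut(V)$ of order $2$: the factors appearing at level $2$ (such as $B_{n,2}$, $D_{n,2}$, $A_{n,2}$) and the non-simply-laced level-$1$ factors $C_{4,1}$ typically arise from a $\mathbb{Z}_2$-orbifold, so one expects $\sigma$ to be a lift of a suitable $(-1)$-type isometry acting on a distinguished lattice VOA sitting inside $V$ (for example, the lattice VOA generated by a Cartan subalgebra of $V_1$, or a sublattice determined by the integrable structure of the level-$2$ affine subVOA).

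First I would, for each of the $13$ Lie algebras, pin down the candidate involution $\sigma$: use the affine Kac--Moody/parafermion decomposition of $V$ with respect to its level-$2$ (resp.\ $C_{4,1}$) subalgebras, then identify the $\sigma$-fixed subalgebra $\mathfrak{g}^{\sigma} \subset V_1$ and extract from it a rational lattice $L$ of rank $24$ such that $V_L \subset V^\sigma$. Next I would compute the orbifold VOA $\tilde{V}$ via the construction of the earlier sections and read off its weight-one Lie algebra $\tilde{V}_1$ from the twisted sector contribution. The goal is to arrange matters so that $\tilde{V}_1$ is the Lie algebra of a Niemeier lattice VOA (so that $\tilde{V} \cong V_N$ by~\cite{DM04}), or failing that, the Lie algebra of one of the holomorphic VOAs already proved unique in~\cite{LS15a,LS16,LLin}. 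Explicit dimension and level checks on the possible twisted-module contributions, together with the classification of automorphism conjugacy classes acting on $V_1$, should single out both $\sigma$ and the target $\tilde{V}_1$.

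Once $\tilde{V}$ is identified as a known holomorphic VOA of central charge $24$, I would invoke the reverse $\mathbb{Z}_2$-orbifold: apply the same construction to $\tilde{V}$ using a lifted $(-1)$-isometry on the lattice (or the appropriate simple current) and check that the resulting holomorphic VOA has weight-one Lie algebra of the prescribed type. Since the input $\tilde{V}$ is unique and the relevant involution on $\tilde{V}$ is determined up to conjugacy by the shape of the desired $V_1$, the inverse orbifold produces a VOA uniquely isomorphic to $V$. Carrying this out for each Lie algebra separately is essentially a case analysis; the common template is: (a)~find $\sigma$, (b)~compute $\tilde{V}$, (c)~recognize $\tilde{V}$ as classified, (d)~invert.

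The main obstacle I anticipate is step (a)--(b): showing that the involution $\sigma$ is determined by $V_1$ up to conjugacy in $\Aut(V)$, and that the $\sigma$-twisted module of $V$ is uniquely determined so that the orbifold output $\tilde{V}$ is well-defined intrinsically from $V_1$. This is nontrivial because $V$ is only given abstractly through $V_1$, so one must control the embedding of $V_L$ into $V^\sigma$ and the structure of the twisted modules without a priori knowledge of $V$. For the cases involving non-simply-laced level-$1$ factors like $C_{4,1}^{4}$ and $D_{6,2}B_{3,1}^{2}C_{4,1}$, an additional subtlety is to choose the $(-1)$-isometry so that it acts compatibly with the orbifold realization of $C_{4,1}$ from $D_{4,1}$; this is expected to single out a specific Niemeier root system (e.g.\ $D_{4}^{6}$) as the target, after which uniqueness follows as above.
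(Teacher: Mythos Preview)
Your overall architecture---orbifold $V$ to a known holomorphic VOA $\tilde V$, then invert---matches the paper's, but two of the key mechanisms are misidentified, and as written the plan would not go through.

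First, the automorphism $\sigma$ on $V$ is not a lift of a $(-1)$-isometry on some lattice inside $V$; there is no a~priori lattice VOA structure on $V$ to lift from, and looking for one begs the question. The paper instead takes $\sigma=\sigma_h=\exp(-2\pi\sqrt{-1}\,h_{(0)})$ for an explicit semisimple $h\in V_1$ (a sum of specific fundamental weights, listed in Table~\ref{table 2}), chosen so that $\sigma_h$ acts \emph{trivially} on $V_1$; thus $\g^{\sigma_h}=\g$, not a proper subalgebra as your sketch suggests. The nontrivial work is then to show that $\sigma_h$ nonetheless has order exactly $2$ on all of $V$ (Lemma~\ref{condition}), which requires the commutant machinery of Lemmas~\ref{sec:km}--\ref{sec:hkl} and, for $\g=B_{12,2}$ and $D_{8,2}B_{4,1}^2$, explicit character computations. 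Because $\sigma_h$ fixes $V_1$ and $V^{\mathrm T}(\sigma_h)_{1/2}=0$ (controlled via Lemma~\ref{sec:difference}), the dimension formula~\eqref{eqn:montague} gives $\dim\tilde V_1=2\dim\g+24$; together with $h^\vee/\ell=(\dim\tilde V_1-24)/24$ and Proposition~\ref{sec:classify1} this forces $\tilde V_1$ to be a Niemeier root system (Lemma~\ref{mainLemma}). In particular $C_{4,1}^4$ orbifolds to $E_{6,1}^4$, not $D_{4,1}^6$ as you guess (recall $(E_6)^\theta\cong C_4$).

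Second, you have located the uniqueness obstruction on the wrong side. There is nothing to prove about $\sigma$ up to conjugacy in $\Aut(V)$: since $h$ is given by a formula in $\g\cong V_1$, the automorphism $\sigma_h$ is intrinsically defined on any such $V$. What must be shown (condition~(c) of Theorem~\ref{sec:unique1}) is that on the \emph{target} $\tilde V\cong V_{N(\k)}$, every order-$2$ automorphism $\mu$ with $(V_{N(\k)})_1^\mu\cong\g^{\sigma_h}=\g$ is conjugate to the lift $\theta$ of the $(-1)$-isometry of $N(\k)$. This is the content of Theorems~\ref{main1} and~\ref{main2}, established by a case-by-case analysis of order-$2$ automorphisms of the simple ideals of $\k$ via Helgason's classification (Proposition~\ref{sec:classify1}) together with Corollary~\ref{sec:decomp2} and Proposition~\ref{sec:lift}. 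You allude to this in passing (``the relevant involution on $\tilde V$ is determined up to conjugacy by the shape of the desired $V_1$'') but then flag the wrong issue as the main obstacle; the conjugacy statement on the Niemeier side is in fact the heart of the argument.
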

The holomorphic VOAs in Theorem \ref{sec:cor1} can be
 obtained by applying
$\Z_2$-orbifold construction to Niemeier lattice VOAs and
 lifts of the $(-1)$-isometry of the lattices \cite{DGM}. To apply  the ``Reverse orbifold construction" method proposed  in~\cite{LS16}, there are two key steps. The first step is to find an appropriate semisimple element $h\in V_1$ such that 
the VOA obtained by applying $\Z_2$-orbifold construction to $V$ and the inner automorphism $\sigma_h$ is isomorphic to a Niemeier lattice VOA $V_N$ (see Lemma \ref{mainLemma}). Since $\sigma_h$ acts trivially on the weight one subspace $V_1$ in our cases, the non-trivial part is to show that $\sigma_h$ has order $2$ on $V$ (cf. Lemma \ref{condition}). We also show a technical lemma (see Lemma~\ref{sec:difference}), which helps us to determine the lowest conformal weight of the irreducible twisted module and greatly reduces the amount of calculations  in our cases.  The second main step is to show that any order $2$ automorphism $\mu$ of the Niemeier lattice VOA satisfying $(V_{N})_1^\mu\cong (V_1)^{\sigma_h}$ is conjugate to $\theta$ (cf. Eq. \eqref{theta}).    Although such kinds of results are not easy to show in general, we manage to find an efficient way for proving them in our cases (see  Theorems \ref{main1} and \ref{main2}).

The following is the organization of the paper.
 In Section 2, we recall some facts about orbifold construction associated with inner automorphisms and reverse orbifold construction.  We also prove several lemmas which will be used to determine the lowest conformal weights of twisted modules.
 In Section 3, we determine the conjugacy class of the automorphism $\theta$  of the Niemeier lattice VOA. In Section~4, we determine the appropriate  semisimple element $h\in V_1$ and then prove our main theorem.
\section{Prelimaries}

\subsection{Basic facts about VOAs}
In this subsection, we recall some basic facts about VOAs from \cite{DM04a, DM04, FLM}. A~VOA $V$ is called \emph{strongly regular} if $V$ is self-dual, rational, $C_2$-cofinite and of CFT-type (cf.~\cite{DM04a, DM04}). We call a VOA $V$ a \emph{holomorphic} VOA if $V$ is rational and  has a unique irreducible module up to isomorphisms.

Let $V=\bigoplus_{n=0}^\infty V_n$ be a strongly regular VOA.
Here $V_n$ is the subspace of $V$ of conformal weight $n\in\Z_{\geq 0}$.
It then follows that the weight one space $V_1$ is a~Lie algebra with respect to the bracket $[u, v]=u_{(0)}v$ for any $u, v\in V_1$,
where  $u_{(n)}:V\rightarrow V$ denotes the $n$-th product of $u$ in $V$ for each $n\in\Z$ (see \cite{DM04}). Moreover,
for any simple Lie subalgebra $\mathfrak{s}\subset V_1$, the sub\,VOA of $V$
generated by $\mathfrak{s}$ is isomorphic to the~affine VOA
$L_\mathfrak{s}(k,0)$ for some positive integer $k$ \cite{DM04}.
We then call $\mathfrak{s}$ a~simple Lie subalgebra of $V_1$ with  level $k$ and
write $\mathfrak{s}=\mathfrak{s}_k\subset V_1$. Assume further that $V$ is a holomorphic VOA of central charge $24$, we then have the following result.

\begin{proposition}\cite[Theorem 3, (1.1)]{DM04a}\label{sec:hvee}
Let $V$ be a holomorphic VOA of central charge $24$.
If the Lie algebra $V_1$ is neither $\{0\}$ nor abelian, then $V_1$ is semisimple,
and the conformal vectors of $V$ and the sub\,VOA generated
by $V_1$ are the same.
If $V_1$ is semisimple, then for any simple ideal $\s$ of $V_1$ with the
 level $k\in\Z_{>0}$, the identity
 $h^\vee/k=(\dim V_1-24)/24$
holds, where $h^\vee$ is the dual Coxeter number of $\s$.
\end{proposition}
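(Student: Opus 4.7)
The plan is to prove the three assertions in sequence: first reductivity of $V_1$, next the dichotomy forcing $V_1$ to be abelian or semisimple together with equality of the two conformal vectors, and finally the ratio identity $h^\vee/k=(\dim V_1-24)/24$.

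For reductivity, I would use that every strongly regular VOA is self-dual and hence carries a non-degenerate symmetric invariant bilinear form $(\cdot,\cdot)$. Its restriction to $V_1$ is non-degenerate, and because $(u_{(0)}v,w)+(v,u_{(0)}w)=0$ for $u,v,w\in V_1$, it is invariant in the Lie-algebra sense. A finite-dimensional Lie algebra admitting such a form is reductive, so $V_1=\mathfrak{a}\oplus\bigoplus_i \mathfrak{s}_i$ with $\mathfrak{a}$ abelian and each $\mathfrak{s}_i$ simple. By the standard result already recalled in the excerpt, every $\mathfrak{s}_i$ generates an affine sub-VOA isomorphic to $L_{\mathfrak{s}_i}(k_i,0)$ for some $k_i\in\mathbb{Z}_{>0}$, with Sugawara Virasoro vector $\omega_{\mathfrak{s}_i}$ of central charge $c_i=k_i\dim\mathfrak{s}_i/(k_i+h_i^\vee)$, while $\mathfrak{a}$ generates a rank-$\dim\mathfrak{a}$ Heisenberg sub-VOA.

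The main obstacle is ruling out the mixed case $\mathfrak{a}\neq 0$ with some $\mathfrak{s}_i\neq 0$. The plan is to enlarge the Heisenberg sub-VOA generated by $\mathfrak{a}$ into a lattice sub-VOA $V_L\subseteq V$, where $L$ is a positive-definite even lattice of rank $\dim\mathfrak{a}$: using Zhu's modular invariance one shows that every zero-mode $a_{(0)}$ with $a\in\mathfrak{a}$ acts semisimply with rational spectrum on $V$, the group generated by the $\exp(2\pi i\,a_{(0)})$ is then finite, and its joint eigenspace decomposition of $V$ assembles into the lattice structure $V_L$. The commutant $\mathrm{Com}(V_L,V)$ is strongly regular and holomorphic of central charge $24-\dim\mathfrak{a}$, containing every simple ideal $\mathfrak{s}_i$ in its weight-one subspace; a central-charge and dimension count together with what is known about holomorphic VOAs of central charge below $24$ excludes such a decomposition unless $\mathfrak{a}=0$. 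Once $\mathfrak{a}=0$, the vector $\omega-\sum_i\omega_{\mathfrak{s}_i}$ is a Virasoro vector of central charge $24-\sum c_i$ that commutes with every $\mathfrak{s}_i$, hence lies in the commutant $C=\mathrm{Com}(\langle V_1\rangle,V)$; since $V_1\subseteq\langle V_1\rangle$ forces $C_1=0$, strong regularity of $C$ together with a standard positive-energy argument make this Virasoro element vanish, yielding $\omega=\sum_i\omega_{\mathfrak{s}_i}$.

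With the two conformal vectors identified, comparing central charges gives
\[
24=\sum_{i}\frac{\dim\mathfrak{s}_i}{1+h_i^\vee/k_i}.
\]
To see that $\lambda_i:=h_i^\vee/k_i$ is independent of $i$, I would invoke Zhu's modular invariance on primary vectors of weight $1$: for every $v\in V_1$ the one-point function $Z_V(v,\tau)=\mathrm{tr}_V\,o(v)\,q^{L_0-1}$ is a modular form of weight $1$ on $SL_2(\mathbb{Z})$ and hence vanishes identically, producing the trace identities $\mathrm{tr}_{V_n}\,v_{(0)}=0$ for every $n$. Applying these to Cartan elements of each $\mathfrak{s}_i$ and expanding their action on $V_2$ using $\omega=\sum_j\omega_{\mathfrak{s}_j}$ together with the OPE relation $a_{(1)}b=k_i\langle a,b\rangle_i\mathbf{1}$ for $a,b\in\mathfrak{s}_i$ (with $\langle\cdot,\cdot\rangle_i$ the Killing form normalized so that long roots have squared length $2$), the resulting linear system forces the $\lambda_i$ to take a common value $\lambda$. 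Substituting this $\lambda$ into the central-charge equation yields $24=\dim V_1/(1+\lambda)$, whence $\lambda=(\dim V_1-24)/24$.
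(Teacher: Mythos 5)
You should first note that the paper offers no proof of this proposition at all: it is quoted directly from Dong--Mason [DM04a, Theorem~3, (1.1)], so your attempt has to be measured against that source rather than against anything in the present text. Two of your steps fail. First, the claim that a finite-dimensional Lie algebra carrying a nondegenerate symmetric invariant bilinear form is reductive is false: such ``quadratic'' Lie algebras need not be reductive (the four-dimensional oscillator algebra is solvable and carries one). Reductivity of $V_1$ for a strongly regular $V$ is itself a theorem of Dong--Mason whose proof uses rationality, $C_2$-cofiniteness and modular invariance, not merely the self-dual form. Second, and more fatally, the modular input you invoke for the ratio identity is vacuous: for $v$ in a semisimple Lie algebra one has $\mathrm{tr}_{V_n}\,v_{(0)}=0$ automatically (every element is a sum of brackets, and traces of commutators vanish in any finite-dimensional representation), so the weight-one one-point functions impose no condition whatsoever on the levels, and your ``linear system'' cannot force the ratios $h_i^\vee/k_i$ to coincide.

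The ingredient actually used by Dong--Mason is the weight-two one-point function $Z_V(u[-1]v,\tau)$: Zhu's recursion produces a quasimodular $E_2$-term proportional to $\langle u|v\rangle Z_V(\tau)$, and since $V$ is holomorphic of central charge $24$ and there are no nonzero holomorphic modular forms of weight $2$ for $SL_2(\Z)$, comparison of $q^0$-coefficients yields $\mathrm{tr}_{V_1}\,u_{(0)}v_{(0)}=\frac{\dim V_1-24}{12}\,k\,(u|v)$ for $u,v$ in a simple ideal $\s$ of level $k$; since the left-hand side equals $2h^\vee(u|v)$, the stated identity follows. The same trace identity applied to $u$ in the centre of $V_1$ (left-hand side zero, form nondegenerate) excludes a mixed abelian/semisimple $V_1$ in one line, so your lattice-plus-commutant construction is both unnecessary and, as sketched, incomplete: integrality of the Heisenberg spectrum, strong regularity and holomorphy of the commutant, and the classification input at central charge $24-\dim\mathfrak{a}$ are asserted rather than proved. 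Finally, your argument for $\omega=\sum_i\omega_{\s_i}$ does not work as stated: $C_1=0$ together with strong regularity does not make a Virasoro vector vanish (the Moonshine module has $V_1=0$ and $\omega\neq 0$), and strong regularity of $C=\com_V(\langle V_1\rangle)$ is not known a priori. The correct logical order is the reverse of yours: the ratio identity gives $\sum_i k_i\dim\s_i/(k_i+h_i^\vee)=24$, so the commutant has central charge $0$, and only then does a genuine argument (Dong--Mason's effective central charge) identify it with $\C\mathbf{1}$ and the two conformal vectors with each other.
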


It is also known \cite{DM04} that there exists a unique symmetric invariant bilinear form $\la \cdot |\cdot\ra$ on $V$ such that $\la \1| \1\ra=-1$, 
where $\1$ is the vacuum vector of $V$. Furthermore,
for each simple Lie subalgebra $\s$ of $V_1$ with the level $k$,
we have  $\langle\cdot|\cdot\rangle|_\s=k(\cdot|\cdot)_\s$, where $(\cdot|\cdot)_\s$ denotes the normalized Killing form of $\s$ (see \cite{LS15}).
\vskip.25cm
Let  $R$  be a sub\,VOA of $V$. We consider the commutant $\com_V(R)$ of $R$ in $V$,
that is, $\com_V(R)=\{v\in V\,|\, w_{(n)}v=0, w\in R, n\geq 0\}$. 
\begin{lemma}{\rm(}\cite[Theorem~2]{KM}{\rm)}\label{sec:km}
Suppose that both $R$ and $\com_V(R)$ are strongly regular VOAs and satisfy $\com_V(\com_V(R))=R$.
Then any irreducible $R$-module is embedded in some
irreducible $V$-module as an~$R$-submodule.
\end{lemma}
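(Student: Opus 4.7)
Set $R^c := \com_V(R)$. The strategy is to decompose $V$ over the dual pair $R \otimes R^c$ and then, for any irreducible $R$-module $M$, produce an irreducible $V$-module containing $M$ as an $R$-submodule by inducing $M \otimes R^c$ from $R \otimes R^c$ up to $V$.

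First, since both $R$ and $R^c$ are strongly regular, the tensor product $R \otimes R^c$ is a strongly regular sub\,VOA of $V$, hence $V$ is completely reducible as an $R \otimes R^c$-module. One thereby obtains a finite decomposition
\begin{equation*}
V \;\cong\; \bigoplus_{(i,j)} n_{ij}\, M_i \otimes N_j,
\end{equation*}
indexed by pairs of irreducible $R$-modules $M_i$ and irreducible $R^c$-modules $N_j$. A direct identification of modes shows that $\com_V(R^c)$ is precisely the sum of isotypic components on which $R^c$ acts as its own adjoint representation, and by hypothesis this equals $R$; thus $n_{i0} = \delta_{i,0}$, where the index $0$ labels the adjoint module. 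The symmetric statement $n_{0j} = \delta_{j,0}$ follows from $\com_V(R) = R^c$. So the dual-pair structure $R \leftrightarrow R^c$ is pinned down on the ``diagonal'' part of the decomposition.

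Second, I would invoke the fact that $V$, regarded as an extension of $R \otimes R^c$, is a commutative (\'etale) algebra object in the representation category of $R \otimes R^c$, and that the associated induction functor from $R \otimes R^c$-modules to $V$-modules carries nonzero objects to nonzero objects (exactness and non-degeneracy are ensured by rationality and $C_2$-cofiniteness of both $R$ and $R^c$). Given an arbitrary irreducible $R$-module $M$, apply induction to the nonzero $R \otimes R^c$-module $M \otimes R^c$. The result is a nonzero $V$-module whose restriction to $R \otimes R^c$ contains $M \otimes R^c$ as a submodule via the unit map, and in particular whose restriction to $R$ contains $M$. Since $V$ is rational, this induced $V$-module is completely reducible, and at least one irreducible $V$-summand $W$ has $M$ embedded in $W$ as an $R$-submodule.

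The main technical obstacle is justifying the existence and exactness of the induction functor from the category of $R \otimes R^c$-modules to the category of $V$-modules. One route is via the machinery of \'etale algebras in modular tensor categories (as developed by Huang--Kirillov--Lepowsky and Kirillov--Ostrik); alternatively, following the direct approach of Karel--Miyamoto, one works with intertwining operators of $V$ and uses the strong-regularity hypotheses to guarantee that the relevant fusion products are well-defined and non-degenerate, so that induction of a nonzero $R \otimes R^c$-module yields a nonzero $V$-module whose $R$-socle contains the prescribed $M$.
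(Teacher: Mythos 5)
The paper does not actually prove this lemma: it is quoted from Krauel--Miyamoto \cite[Theorem~2]{KM} (the authors are Krauel and Miyamoto, not ``Karel--Miyamoto''), and their proof rests on modular invariance of multivariable trace functions. So the only comparison available is with that argument, and your proposal is genuinely different --- and, as written, it has a gap. Your first step is fine: $V$ is completely reducible over $R\otimes\com_V(R)$, and the double-commutant hypothesis pins down the isotypic components $n_{i0}=\delta_{i0}$, $n_{0j}=\delta_{0j}$ (vacuum-like vectors generate copies of the vacuum module, so the identification of $\com_V(\com_V(R))$ is standard). But this is not where the difficulty of the lemma lies.

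The gap is in the induction step. Writing $R^c=\com_V(R)$, the induction functor $F(X)=V\boxtimes_{R\otimes R^c}X$ takes values in the category of not-necessarily-local modules for the algebra object $V$; $F(X)$ is an honest (untwisted) $V$-module only when the monodromy of $X$ with $V$ is trivial, and for $X=M\otimes R^c$ this typically fails, so the assertions ``the result is a nonzero $V$-module'' and ``at least one irreducible $V$-summand $W$ has $M$ embedded in $W$'' are unjustified. Concretely, take $V=V_{E_8}$, $R=V_{A_1}\subset V$, $R^c=V_{E_7}$, so that $V\cong (R\otimes R^c)\oplus(M\otimes N)$ with $M$, $N$ the nontrivial irreducible modules of lowest conformal weights $1/4$ and $3/4$. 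Then $F(M\otimes R^c)\cong (M\otimes R^c)\oplus(R\otimes N)$ has lowest weights $1/4$ and $3/4$, hence is not an ordinary $V$-module and contains no nonzero ordinary $V$-submodule at all; the lemma is nevertheless true here because $M$ sits inside $V$ paired with $N$ rather than with $R^c$. The whole content of the statement is that each irreducible $M_i$ admits a suitable partner $N_j$ and a suitable irreducible $V$-module in which $M_i\otimes N_j$ occurs, and this is precisely where Krauel--Miyamoto invoke modular invariance (non-vanishing of the relevant coefficients under the $S$-transformation of multivariable trace functions); categorically it corresponds to the non-degeneracy of the braiding on $\mathrm{Rep}(R)$ and $\mathrm{Rep}(R^c)$, not merely to exactness or non-vanishing of induction. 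Since that input never enters your argument, the proposal does not prove the lemma.
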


We also need the following result.
\begin{lemma}{\rm(\cite[Theorem 3.5]{HKL15})}\label{sec:hkl}
Let $T$ be a~$C_2$-cofinite, simple VOA
of CFT-type and $S$ a~full sub VOA of $T$.
Assume that $S$ is  strongly regular and that the lowest conformal
 weight of any irreducible $S$-module is positive
except for the vacuum module of $S$.
Then $T$ is rational.
\end{lemma}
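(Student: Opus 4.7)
The plan is to realize $T$ as a commutative associative algebra object in the modular tensor category of $S$-modules, and to deduce the rationality of $T$ from semisimplicity of the category of local modules over this algebra.

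First, since $S$ is strongly regular, I would invoke Huang's theorem to conclude that the category $\Rep S$ is a semisimple modular tensor category. Because $S$ is rational, $T$ decomposes as an $S$-module into a finite direct sum $T=\bigoplus_{i} n_i M_i$ of irreducible $S$-modules; the simplicity of $T$ together with the CFT-type assumption (so $T_0=\C\1$) forces the vacuum module $S$ to appear with multiplicity one. The hypothesis on lowest conformal weights then guarantees that every summand $M_i\not\cong S$ has strictly positive lowest weight, and since $T$ is $\Z_{\geq 0}$-graded by $L(0)$-eigenvalues, those lowest weights must in fact be positive integers.

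Next I would view the vertex operator multiplication of $T$ as endowing $T$ with the structure of a haploid commutative associative algebra $A$ in $\Rep S$, in the spirit of the correspondence between VOA extensions and algebra objects developed in~\cite{HKL15} (following Kirillov--Ostrik). The integrality of the conformal weights of the $M_i$ appearing in $T$ guarantees that the associated ribbon twist $\theta_A$ equals $\id_A$; equivalently, $A$ is commutative in the symmetric sense (a ``bosonic'' rather than parafermionic extension), which is the hypothesis needed for the Kirillov--Ostrik theorem on local modules. The full positivity assumption on $\Rep S$ in the statement plays a role here too: it rules out the presence of non-integrally graded irreducible $S$-modules that could contribute to a half-integer graded algebra structure.

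The central technical step is to exhibit an equivalence between the category of (ordinary) $T$-modules and the category of local $A$-modules in $\Rep S$. Given a $T$-module $M$, restriction along $S\hookrightarrow T$ endows $M$ with an $A$-module structure via $Y_M$, and locality (triviality of the double braiding with $A$) follows from the commutativity and associativity of intertwining operators for $S$ together with the integrality of conformal weights. Conversely, local $A$-modules are lifted back to $T$-modules using Huang's intertwining-operator tensor product theory. A Kirillov--Ostrik-type theorem then asserts that the category of local modules over a haploid commutative algebra with trivial twist in a modular tensor category is itself semisimple, which through the above equivalence yields semisimplicity of the $T$-module category; combined with the assumed $C_2$-cofiniteness of $T$, this gives rationality. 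The principal obstacle will be rigorously verifying this functorial equivalence, and in particular matching the VOA Jacobi identity for $Y_M$ with the $A$-module axioms plus locality; once this correspondence is in place, the result reduces to established categorical theorems.
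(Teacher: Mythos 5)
You should first note that the paper contains no internal proof of this lemma: it is imported verbatim as \cite[Theorem 3.5]{HKL15}, so the only ``paper proof'' is the citation. Your sketch reconstructs precisely the strategy of that cited source (extension VOA $\leftrightarrow$ haploid commutative associative algebra $A$ with trivial twist in the modular tensor category of $S$-modules, $T$-modules $\leftrightarrow$ local $A$-modules, then semisimplicity of the latter category), so architecturally you are following the same route as the reference rather than inventing a different one.

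There is, however, a genuine gap at the step you quote as a black box. The Kirillov--Ostrik semisimplicity theorem does not hold for an arbitrary haploid commutative algebra with trivial twist: one needs $A$ to be rigid (a ``rigid $C$-algebra''), i.e.\ the pairing $A\otimes A\to \mathbf{1}$ induced by multiplication and the counit must be nondegenerate, equivalently (in this setting) the categorical dimension of $A$ must be nonzero. Establishing this is exactly where the hypothesis that every non-vacuum irreducible $S$-module has positive lowest conformal weight is actually used: by quantum-dimension results (Dong--Jiao--Xu), positivity of the lowest weights forces all quantum dimensions of irreducible $S$-modules to be positive, hence $\dim A=\sum_i n_i\,\mathrm{qdim}\,M_i>0$, which together with haploidness gives rigidity of $A$ and only then semisimplicity of the (local) $A$-module category. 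Your proposal instead assigns this hypothesis the role of excluding half-integrally graded summands, which is beside the point: integrality of the weights of the $M_i$ already follows from $T$ being of CFT type with $S$ a full sub-VOA (same conformal vector), and haploidness/multiplicity one of the vacuum comes from $T_0=\C\1$, as you correctly observe. As written, therefore, the decisive semisimplicity step is unsupported. A smaller point: to conclude rationality you must handle all admissible ($\N$-gradable) $T$-modules, and the reason they fit into the categorical equivalence is that their restrictions to the rational, $C_2$-cofinite VOA $S$ are semisimple and hence lie in the category of $S$-modules; the $C_2$-cofiniteness of $T$ that you invoke at the end is not the operative ingredient in that step.
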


\subsection{Orbifold construction associated with inner automorphisms}\label{formula}
 In this subsection, we recall from \cite{LS15} some formulas about orbifold construction of holomorphic VOAs of central charge 24.

Let $V$ be a strongly regular holomorphic VOA of central charge $24$, $g$ be an~automorphism of $V$ of prime order $p$.
 We then know that there is a~unique $g^r$-twisted $V$-module $V^{\mathrm{T}}(g^r)$
for each $1\leq r\leq p-1$ (\cite[Theorem~10.3]{DLM00}).
Moreover, the fixed point subspace $V^g$ of $V$ with respect to $g$
is a~sub\,VOA of $V$.
The weight $n$ subspace of $V^g$ coincides with $V_n^g=V_n\cap V^g$ ($n\geq 0$).
We say that the pair $(V,g)$ satisfies the {\it orbifold condition} if
 there exists a~unique simple VOA $\tilde{V}$
 such that $V^g$ is embedded in $\tilde{V}$ and
$\tilde{V}\cong V^g\oplus \bigoplus_{r=1}^{p-1}V^{\mathrm{T}}(g^r)_\mathbb{Z}$ as a~$V^g$-module,
where $V^{\mathrm{T}}(g^r)_\Z$  is the subspace of $V^{\mathrm{T}}(g^r)$ of integral conformal weights (cf.~\cite{EMS}).
If $(V,g)$ satisfies the orbifold condition,  the VOA
$\tilde{V}$ which satisfies
the above assumptions is strongly regular and holomorphic. We refer to $\tilde{V}$ as the VOA obtained by applying the $\Z_p$-orbifold
construction to $V$ and $g$,  and denote the VOA $\tilde{V}$ by $\tilde{V}(g)$.

Suppose that the Lie algebra $V_1$ is semisimple. Then, $V_1$ is isomorphic to $ \g=\g_{(1),k_1}\oplus \cdots \oplus \g_{(t),k_t}$ for some simple ideals $ \g_{(1)},\cdots, \g_{(t)}$ with levels $k_1,\cdots,k_t\in\Z_{>0}$, respectively. Fix a Cartan subalgebra $\h$ of $V_1$, and let $h$ be a semisimple element in $\h$ such that:

\smallskip\noindent
(i): ${\rm Spec}(h_{(0)})\subset (1/2)\Z$ and
$\mathrm{Spec}(h_{(0)})\not\subset \Z$;

\noindent
(ii): $\la h| h\ra\in \Z$;

\noindent
(iii): The lowest conformal weight of $V^{(h)}$ is positive.

\smallskip\noindent
Then the inner automorphism $\sigma_h:=\exp(-2\pi\sqrt{-1}h_{(0)})$ of $V$ is of order $2$.

\begin{theorem}{\rm (\cite{LS15})}\label{existence}
Let $V$ and $h$ be as above. Then $(V,\sigma_h)$ satisfies the orbifold condition.
\end{theorem}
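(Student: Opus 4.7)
The plan is to construct $\tilde V$ explicitly as the $\mathbb Z_2$-graded extension
\[
\tilde V \;=\; V^{\sigma_h}\,\oplus\, V^{\mathrm T}(\sigma_h)_{\mathbb Z},
\]
and then verify strong regularity via Lemma~\ref{sec:hkl} and holomorphicity via a fusion-rule count among the irreducible $V^{\sigma_h}$-modules. Hypotheses (i)--(iii) are tailored precisely so that this $\mathbb Z_2$-extension lies in integer conformal weights and the non-vacuum irreducible $V^{\sigma_h}$-summands all have positive lowest conformal weight.

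First, I would check that $\sigma_h = \exp(-2\pi\sqrt{-1}\,h_{(0)})$ has order exactly $2$, which is immediate from (i). Next, Li's $\Delta(h,z)$-operator construction identifies the unique irreducible $\sigma_h$-twisted module $V^{\mathrm T}(\sigma_h)$ (furnished by \cite[Theorem~10.3]{DLM00}) with a spectral-flowed copy of $V$, so that its $L(0)$-eigenvalues take the form $\tfrac12\langle h|h\rangle$ plus eigenvalues of $L(0)+h_{(0)}$ on $V$. Condition (ii) then forces these eigenvalues into $\tfrac12\mathbb Z$, and condition (iii) asserts positivity of the lowest one. In particular $V^{\mathrm T}(\sigma_h)_{\mathbb Z}$ is well defined and its lowest conformal weight is strictly positive.

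To equip $\tilde V$ with a simple VOA structure I would invoke the standard simple-current extension machinery. The four irreducible $V^{\sigma_h}$-modules appearing as summands of $V$ and $V^{\mathrm T}(\sigma_h)$, split by integrality of conformal weight, form a $\mathbb Z_2\times\mathbb Z_2$ fusion pattern; the diagonal $\mathbb Z_2$ subgroup selected by integer conformal weight carries a unique simple VOA structure extending $V^{\sigma_h}$. Lemma~\ref{sec:hkl}, applied with $S=V^{\sigma_h}$ and $T=\tilde V$, then yields rationality, while $C_2$-cofiniteness and CFT type are inherited from the finite extension. Holomorphicity follows because the two remaining irreducible $V^{\sigma_h}$-modules assemble, by the same $\mathbb Z_2$-grading principle, into a single simple $\tilde V$-module, and a character/modular invariance check forces this to be isomorphic to $\tilde V$ itself.

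The main obstacle I anticipate is the clean identification of $V^{\mathrm T}(\sigma_h)$ and the precise description of its $L(0)$-spectrum, so that the conformal weight bookkeeping and condition (iii) fit together correctly; once Li's $\Delta$-operator realization is in hand, the fusion picture among the four irreducible $V^{\sigma_h}$-modules, and hence the construction and uniqueness of $\tilde V$, becomes essentially formal.
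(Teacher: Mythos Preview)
The paper does not prove this theorem itself; it is quoted from \cite{LS15} with no argument supplied here, so there is no proof in the present paper to compare your sketch against.

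Your outline does follow the strategy of \cite{LS15} and the general cyclic-orbifold machinery of \cite{EMS}: realize $V^{\mathrm T}(\sigma_h)$ via Li's $\Delta$-operator, check that conditions (i)--(iii) place the four irreducible $V^{\sigma_h}$-summands of $V\oplus V^{\mathrm T}(\sigma_h)$ at the right conformal weights, and build $\tilde V$ as a $\mathbb Z_2$ simple-current extension. The main gap in your sketch is the appeal to Lemma~\ref{sec:hkl} with $S=V^{\sigma_h}$: that lemma \emph{presupposes} that $S$ is strongly regular, and the strong regularity of the fixed-point subalgebra $V^{\sigma_h}$ (in particular its rationality and $C_2$-cofiniteness) is itself a substantial input established in the cited literature, not something you can extract from Lemma~\ref{sec:hkl}. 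Likewise, the claim that the irreducible $V^{\sigma_h}$-modules are exactly four, form a $\mathbb Z_2\times\mathbb Z_2$ group under fusion, and that the integer-weight $\mathbb Z_2$ extension $\tilde V$ exists and is unique, rests on the full classification of $V^{\sigma_h}$-modules and the simple-current formalism developed in \cite{LS15} and \cite{EMS}; these are genuine theorems rather than formalities, so your final paragraph understates where the work lies.
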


Moreover, we have the following result.
\begin{proposition}{\rm (\cite{Mo} and \cite{LS15})}
Let $V$, $h$ be as above. Then we have
$$
\dim V_1+\dim \tilde{V}(\sigma_h)_1=3\dim V^{\sigma_h}_1+24(1-\dim V^{\mathrm{T}}({\sigma_h})_{1/2}).
$$ 
In particular, if $V^{\sigma_h}_1=V_1$ and $V^{\mathrm{T}}({\sigma_h})_{1/2}=0$, then we have
\begin{equation}\label{eqn:montague}
\dim \tilde{V}(\sigma_{h})_1=2\dim V_1+24.
\end{equation}
\end{proposition}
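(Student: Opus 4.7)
The plan is to extract $\dim V^{\mathrm{T}}(\sigma_h)_1$ from modular invariance of the relevant trace functions, then substitute into the direct-sum decomposition of $\tilde V(\sigma_h)_1$. First I would note that since $V^{\mathrm{T}}(\sigma_h)_{\mathbb Z}$ contains every weight-one vector of $V^{\mathrm{T}}(\sigma_h)$, the orbifold decomposition gives $\dim \tilde V(\sigma_h)_1 = \dim V^{\sigma_h}_1 + \dim V^{\mathrm{T}}(\sigma_h)_1$. Condition (i) forces $\sigma_h$ to act on $V_1$ with eigenvalues $\pm 1$, so $\dim V^{\sigma_h}_1 = \tfrac{1}{2}(\dim V_1 + \mathrm{tr}_{V_1}\sigma_h)$, and the problem reduces to computing $\dim V^{\mathrm{T}}(\sigma_h)_1$.

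To that end I would consider the trace function
\[
f_2(\tau) = \mathrm{tr}_V\bigl(\sigma_h\, q^{L(0)-1}\bigr),
\]
which, by Zhu's modular invariance theorem together with $\sigma_h^2 = 1$, is a weight-zero modular function on $\Gamma_0(2)$. At the cusp $\infty$ it expands as $q^{-1} + \mathrm{tr}_{V_1}\sigma_h + O(q)$. Via the $S$-transform identity $f_2(-1/\tau) = \mathrm{tr}_{V^{\mathrm{T}}(\sigma_h)}(q^{L(0)-1})$, its expansion at the cusp $0$ in the width-$2$ local parameter $\tilde q = e^{-\pi i/\tau}$ starts with $\dim V^{\mathrm{T}}(\sigma_h)_{1/2}\,\tilde q^{-1} + \dim V^{\mathrm{T}}(\sigma_h)_1 + O(\tilde q)$. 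Hence $f_2$ has at most simple poles at both cusps of $X_0(2)$.

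The space of weight-zero modular functions on $\Gamma_0(2)$ with such pole behavior is three-dimensional (Riemann-Roch on the genus-zero curve $X_0(2)$); a natural basis is $\{1, t, s\}$ where
\[
t(\tau) = \Bigl(\tfrac{\eta(\tau)}{\eta(2\tau)}\Bigr)^{24} = q^{-1} - 24 + O(q), \qquad s(\tau) = 2^{12}\Bigl(\tfrac{\eta(2\tau)}{\eta(\tau)}\Bigr)^{24} = O(q),
\]
which are interchanged by the Fricke involution, so $s$ has expansion $\tilde q^{-1} - 24 + O(\tilde q)$ at the cusp $0$. Writing $f_2 = A + B t + C s$, matching the $q$-expansion at $\infty$ gives $B = 1$ and $A = \mathrm{tr}_{V_1}\sigma_h + 24$, while matching at $0$ gives $C = \dim V^{\mathrm{T}}(\sigma_h)_{1/2}$ and $A - 24C = \dim V^{\mathrm{T}}(\sigma_h)_1$. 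Eliminating $A$ and $C$ yields
\[
\dim V^{\mathrm{T}}(\sigma_h)_1 = \mathrm{tr}_{V_1}\sigma_h + 24\bigl(1 - \dim V^{\mathrm{T}}(\sigma_h)_{1/2}\bigr);
\]
substituting into the decomposition and using $\mathrm{tr}_{V_1}\sigma_h = 2\dim V^{\sigma_h}_1 - \dim V_1$ produces the claimed identity.

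The main obstacle is to justify rigorously the $\Gamma_0(2)$-modularity and the simple pole structure of $f_2$ at both cusps; once these are in hand, the remainder is a short linear-algebra calculation inside a three-dimensional space of modular functions. The ``$+24$'' in the identity traces back to the constant term $-24$ in the expansion $t(\tau) = q^{-1} - 24 + O(q)$, which itself reflects the constant term of $\eta(\tau)^{-24}$.
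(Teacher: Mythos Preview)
The paper does not supply its own proof of this proposition; it is stated as a quotation of results from \cite{Mo} and \cite{LS15}. Your sketch is precisely the standard argument used in those references: realize $f_2(\tau)=\mathrm{tr}_V(\sigma_h\,q^{L(0)-1})$ as a weight-zero modular function for $\Gamma_0(2)$ via the Dong--Li--Mason modular invariance theorem, observe that it has at most simple poles at the two cusps of $X_0(2)$, write it in the basis $\{1,t,s\}$ of the three-dimensional space of such functions, and match coefficients at both cusps. The hypotheses (i)--(iii) on $h$ imposed just before the proposition are exactly what guarantees the conformal weights of $V^{\mathrm{T}}(\sigma_h)$ lie in $\tfrac12\Z_{>0}$ (conditions (i) and (ii) give half-integrality via Li's construction $(V^{(h)},Y^{(h)})$, condition (iii) gives positivity), so the pole at the cusp $0$ is indeed at most simple; you correctly flag this as the point requiring justification. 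The computation and the conclusion are correct.
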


\subsection{Reverse orbifold construction of holomorphic VOAs}\label{reverse}
In this subsection, we recall from \cite{LS16} the method called ``reverse orbifold construction". Let $V$ be a~strongly regular holomorphic VOA
and $g$ an~automorphism of $V$ of prime order $p$ such that $(V,g)$ satisfies the orbifold condition.
Let $$W=\tilde{V}(g)=V^g\oplus \bigoplus_{r=1}^{p-1}V^{\mathrm{T}}(g^r)_\Z$$ be the VOA
 obtained by applying the $\Z_p$-orbifold construction to $V$ and $g$.
Define an~automorphism $a=a_{V,g}$ of $W$ by
$a|_{V^{g}}=1$ and $a|_{V^{\mathrm{T}}(g^r)_\Z}=e^{2\pi\sqrt{-1}r/p}$ ($1\leq r\leq p-1$).
 It then follows that the pair $(W,a)$ satisfies the orbifold condition
and  $\tilde{W}(a)\cong V$ (see \cite{EMS}).

Let $\g$ be a~semisimple Lie algebra and $h$ a~semisimple element of $\g$.
Assume that there exists a~strongly regular holomorphic VOA $U$ such that
for any strongly regular holomorphic VOA $V$ satisfying $V_1\cong \g$, the following conditions hold:

\smallskip\noindent
{\rm (a):} $\sigma_h$ has prime order $p$ on $V$ and the pair $(V,\sigma_h)$ satisfies
the orbifold condition;

\noindent
{\rm (b):} $\tilde{V}(\sigma_h)\cong U$;

\noindent
{\rm (c):} for any automorphism $g$ of $U$ of order $p$, if $U^g_1\cong \g^{\sigma_h}$,
then $g$ is conjugate to  the automorphism $a_{V,\sigma_h}$
of $\tilde{V}(\sigma_h)\cong U$  in $\Aut(U)$.

\smallskip\noindent
Then we have the following result which was essentially obtained in \cite{LS16}.
\begin{theorem}
\label{sec:unique1}
The structure of a~strongly regular holomorphic VOA $V$
such that $V_1\cong \g$ is unique up to isomorphisms.
\end{theorem}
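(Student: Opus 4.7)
The plan is to apply Theorem \ref{sec:unique1}, the reverse orbifold construction framework, separately to each of the thirteen Lie algebra types $\g$ in the statement. For each such $\g$, a natural candidate Niemeier lattice $N$ is singled out by the requirement that the $\Z_2$-orbifold construction on $V_N$ associated to a lift of the $(-1)$-isometry of $N$ recovers a holomorphic VOA with weight one algebra $\g$; set $U = V_N$ in the framework. I then have to verify conditions (a)--(c) of the setup for an appropriately chosen semisimple element $h$ of a Cartan subalgebra of $V_1$.

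For condition (a), I would guide the choice of $h$ by Proposition \ref{sec:hvee}, which pins down the level $k_i$ of each simple summand of $V_1$ via $h^\vee/k = (\dim V_1 - 24)/24$, and arrange the coordinates of $h$ so that the inner automorphism $\sigma_h = \exp(-2\pi\sqrt{-1}\,h_{(0)})$ acts trivially on $V_1$ while being genuinely of order two on $V$. The three conditions (i)--(iii) of Section \ref{formula} then have to be checked: (i) is built into the choice, (ii) is an integrality check on $\langle h | h\rangle$ via $\langle \cdot | \cdot\rangle|_\s = k(\cdot|\cdot)_\s$, and (iii), the positivity of the lowest conformal weight of the twisted module, is the delicate point --- here I would invoke the technical Lemma \ref{sec:difference} to reduce the case analysis to a short computation. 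Theorem \ref{existence} then produces $\tilde V(\sigma_h)$, and condition (b) follows because \eqref{eqn:montague} forces $\dim \tilde V(\sigma_h)_1 = 2\dim V_1 + 24$, which against Schellekens' list singles out the weight one algebra of $V_N$; the already established uniqueness of holomorphic VOAs with Niemeier weight one algebras (case (i) after Conjecture \ref{conj}) then upgrades this to a VOA isomorphism $\tilde V(\sigma_h)\cong V_N$.

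The main obstacle is condition (c): showing that every order-two $\mu \in \Aut(V_N)$ with $(V_N)_1^\mu \cong \g^{\sigma_h}$ is conjugate to the standard lift $\theta$ of $-1 \in O(N)$. My plan is to use the known description of $\Aut(V_N)$: modulo an inner automorphism coming from $(V_N)_1$, the element $\mu$ descends to an order-two element $\bar\mu \in O(N)$, and the constraint on $(V_N)_1^\mu$ forces the fixed sublattice of $\bar\mu$ to have rank matching the $\g^{\sigma_h}$ dimension. In each of the thirteen cases this severely restricts $\bar\mu$, and a case-by-case analysis of the orthogonal group of the relevant Niemeier lattice should show that $\bar\mu$ is $O(N)$-conjugate to $-1$; tracking the cocycle data of the central extension $\hat N$ then controls the inner-automorphism correction and lifts the conjugacy to $\mu \sim \theta$ in $\Aut(V_N)$. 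This is the lattice-theoretic heart of the argument and is where I expect the bulk of the work to lie; in the paper it is packaged as Theorems \ref{main1} and \ref{main2}, with the case-independent selection of $h$ carried out via Lemmas \ref{mainLemma} and \ref{condition}.
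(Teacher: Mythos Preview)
You have conflated Theorem~\ref{sec:unique1} with Theorem~\ref{sec:cor1}. Theorem~\ref{sec:unique1} is the abstract reverse-orbifold lemma: it \emph{assumes} that a semisimple $h\in\g$ and a holomorphic VOA $U$ satisfying (a)--(c) have already been supplied, and deduces uniqueness from those hypotheses alone. Nothing about the thirteen specific Lie algebras, Niemeier lattices, or the choice of $h$ enters its proof. The paper's argument is a few lines: given two strongly regular holomorphic VOAs $V,W$ with $V_1\cong\g\cong W_1$, condition~(b) gives $\tilde V(\sigma_h)\cong U\cong\tilde W(\sigma_h)$; the induced automorphisms $a=a_{V,\sigma_h}$ and $b=a_{W,\sigma_h}$ of $U$ each satisfy $U_1^a\cong\g^{\sigma_h}\cong U_1^b$, so condition~(c) makes them conjugate in $\Aut(U)$; since conjugate automorphisms yield isomorphic orbifolds and $\tilde U(a)\cong V$, $\tilde U(b)\cong W$, one concludes $V\cong W$. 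Your proposal never carries out this deduction---it instead takes Theorem~\ref{sec:unique1} as a black box to be applied.

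What you have sketched is the verification of hypotheses (a)--(c) for the thirteen Lie algebras of Table~\ref{table1}, i.e.\ a proof of Theorem~\ref{sec:cor1} (equivalently Theorem~\ref{mainresult}). Even on that reading, your plan for condition~(c) diverges from the paper's: the paper never descends to $O(N)$ or tracks cocycles on $\hat N$. It works purely at the Lie-algebra level, proving (Theorem~\ref{main1}) that any order-$2$ automorphism of $\k=(V_{N(\k)})_1$ with fixed subalgebra isomorphic to $\g$ is conjugate to $\theta|_\k$ in $\Aut(\k)$, via the classification of involutions of simple Lie algebras (Proposition~\ref{sec:classify1}) together with Corollary~\ref{sec:decomp2}; the passage up to $\Aut(V_{N(\k)})$ is then a one-line appeal to Proposition~\ref{sec:lift}.
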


\begin{proof}
Let $V$ and $W$ be strongly regular holomorphic VOAs such that $V_1\cong \g\cong W_1$.
By the condition (b), we see that $\tilde{V}(\sigma_h)\cong U\cong \tilde{W}(\sigma_h)$.
Let $a$ and $b$ be the automorphisms of $U$ induced from the automorphisms
$a_{V,\sigma_h}$ and $a_{W,\sigma_h}$ of $\tilde{V}(\sigma_h)$ and  $\tilde{W}(\sigma_h)$,
respectively.
It then follows from (c) that $a$ is conjugate to $b$.
By applying the $\Z_p$-orbifold construction to $(U,a)$ and $(U,b)$,
we see that $V\cong W$.
\end{proof}

\begin{remark}
Although condition (c) in Theorem~\ref{sec:unique1} is sufficient for our
purpose in this paper,
it is usually too strong. Note that there exist automorphisms
$g$ and $h$ of a~lattice VOA $V_L$ such that
$(V_L)^g_1\cong (V_L)^h_1$ as Lie algebras but
$g$ and $h$ are not conjugate in $\Aut(V_L)$ (see for example~\cite[p.1583]{LS16a}).
In this situation, we may replace~(c) by a weaker condition

\smallskip\noindent
(c$'$): any automorphism $g$ of $U$ of order $p$
such that the pair $(U,g)$ satisfies
the orbifold condition and $\tilde{U}(g)_1\cong \g$ is conjugate to $a_{U,g}$.

\smallskip\noindent
Then Theorem~\ref{sec:unique1} still holds.
\end{remark}

In Section \ref{sec}, we will study the case that $V$ is a holomorphic VOA of central charge $24$ with the Lie algebra $V_1=\mathfrak{g}$, where $\g$ is one of Lie algebras in Table \ref{table1}. In this case, the VOA $U$ will be the lattice VOA $V_N$ associated to some Niemeier lattice $N$. To verify the condition (b) in this case, we will need the following result, which can be deduced from~\cite[Theorem~3]{DM04}.

\begin{proposition}\label{sec:Niemeier}
Let $N$ be a Niemeier lattice and $U$ be a strongly regular holomorphic VOA
 of central charge $24$
such that $U_1\cong (V_N)_1$. Then the vertex operator algebra $U$ is isomorphic to the lattice VOA
$V_{N}$.
\end{proposition}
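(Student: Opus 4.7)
The plan is to reduce the statement to showing that $U$ must itself be a lattice vertex operator algebra, after which the classical fact that Niemeier lattices are uniquely determined by their root systems will finish the argument. The central input is~\cite[Theorem~3]{DM04}.

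I would first dispose of the Leech case. If $N$ is the Leech lattice $\Lambda$, then $(V_N)_1$ is abelian of dimension $24$, and the conclusion that any strongly regular holomorphic VOA of central charge $24$ with abelian rank-$24$ weight-one space is isomorphic to $V_\Lambda$ is part of~\cite[Theorem~3]{DM04}. So assume from now on that $N$ has a nontrivial root system $R$, in which case $(V_N)_1$ is semisimple of rank $24$ with every simple ideal simply laced at level $1$.

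Next I would pin down the levels on the $U$ side. By Proposition~\ref{sec:hvee} applied to $U$, every simple ideal $\s$ of $U_1$ satisfies
\[
h^\vee(\s)/k \;=\; (\dim U_1 - 24)/24.
\]
Since $U_1\cong (V_N)_1$ as Lie algebras, the right-hand side matches the corresponding constant for $V_N$, and $h^\vee(\s)$ is intrinsic to $\s$; hence the levels on $U$ agree with those on $V_N$, so every simple ideal of $U_1$ is simply laced at level $1$. Consequently the sub-VOA $\langle U_1\rangle$ generated by $U_1$ is isomorphic to the tensor product of the affine VOAs $L_{\s_i}(1,0)$, which is in turn isomorphic to the lattice VOA $V_Q$ associated with the root lattice $Q$ of $R$. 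A central charge count gives $c(\langle U_1\rangle)=\mathrm{rank}(R)=24=c(U)$, so by Proposition~\ref{sec:hvee} the conformal vector of $\langle U_1\rangle$ coincides with that of $U$ and $\langle U_1\rangle$ is a full sub-VOA of $U$.

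At this point I would invoke~\cite[Theorem~3]{DM04} to conclude that $U\cong V_L$ for some even unimodular lattice $L$ of rank $24$ containing $Q$. The norm-$2$ vectors of $L$ span the root part of $(V_L)_1\cong U_1\cong (V_N)_1$, so the root system of $L$ equals $R$; by the classical uniqueness of Niemeier lattices with prescribed root system, $L\cong N$ and therefore $U\cong V_N$. The main obstacle in this plan is the promotion from ``$U$ contains the lattice sub-VOA $V_Q$ of the same central charge'' to ``$U$ itself is a lattice VOA'', and this is precisely the content of~\cite[Theorem~3]{DM04}; once that is in hand, the remainder of the argument is formal.
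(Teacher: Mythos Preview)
Your proposal is correct and matches the paper's approach: the paper does not give an independent proof of this proposition but simply records it as a consequence of~\cite[Theorem~3]{DM04}, and your argument is exactly a spelling-out of that deduction (matching levels via Proposition~\ref{sec:hvee}, identifying the sub\,VOA generated by $U_1$ with $V_Q$, applying~\cite[Theorem~3]{DM04} to get $U\cong V_L$, and then invoking the classical uniqueness of Niemeier lattices by root system). There is nothing to add.
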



We next recall some facts about automorphisms of lattice VOAs. Let $L$ be a positive definite even lattice. Set $\h=\C\otimes_{\Z}L$ and view $\h$ as an abelian Lie algebra equipped with a non-degenerate symmetric bilinear form. Let $\hat \h$ be the corresponding Heisenberg Lie algebra and $M(1)$ the highest weight module of $\hat \h$ with the highest weight $0$ (cf. \cite{FLM}). Then the lattice VOA $V_L$ associated to $L$ is defined on the vector space $M(1)\otimes \C[L]$, where $\C[L]={\rm span}~\{e^x|x\in L\}$ (cf. \cite{FLM}). In particular, $V_L$ is spanned by the vectors of the form $h^1(-n_1)\cdots h^k(-n_k)\otimes e^x$, where $h^1, \cdots, h^k\in \h$, $x\in L$ and $n_1, \cdots, n_k$ are positive integers (cf. \cite{FLM}).
Let $\theta:V_L\to V_L$ be the linear map determined by
\begin{align}\label{theta}
h^1(-n_1)\cdots h^k(-n_k)\otimes e^x&\mapsto (-1)^kh^1(-n_1)\cdots h^k(-n_k)\otimes e^{-x}.
\end{align}
It was proved in \cite{FLM} that $\theta$ is an automorphism of $V_L$ of order 2. Notice that $\theta$ is a lift of the $(-1)$-isometry of $L$ (cf. \cite{DGH}).

To determine the conjugacy class of the automorphism $\theta$, we also need the following result.
\begin{proposition}[{\cite[Theorem~D.6]{DGH}}]\label{sec:lift}%
Any lifts of the $(-1)$-isometry of $L$ are conjugate under $\mathrm{Aut}(V_L)$.
\end{proposition}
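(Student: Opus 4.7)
The plan is to show that any two lifts $\theta,\theta'$ of the $(-1)$-isometry differ by a character $\chi\in\Hom(L,\{\pm1\})$, and then to realize each such character by conjugation via an inner automorphism of $V_L$.

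First, since both $\theta$ and $\theta'$ act as $-1$ on the Cartan subalgebra $\h=V_L^{(1)}$, the composition $\theta'\theta^{-1}$ projects to the identity of $O(L)$ and therefore acts trivially on the Heisenberg sub\,VOA $M(1)$. It consequently sends $e^\alpha\mapsto \chi(\alpha)e^\alpha$ for some function $\chi\colon L\to\C^\times$. Compatibility with the lattice vertex operators $Y(e^\alpha,z)$ forces $\chi$ to be a group homomorphism $L\to\C^\times$, and the involutivity $\theta^2=(\theta')^2=1$ forces $\chi(\alpha)^2=1$; hence $\chi\in\Hom(L,\{\pm1\})$ and $\theta'(e^\alpha)=\chi(\alpha)\,\theta(e^\alpha)$.

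Next, for $h\in\h$ the inner automorphism $\sigma_h=\exp(2\pi\sqrt{-1}\,h_{(0)})$ acts as the identity on $\h$ and sends $e^\alpha\mapsto e^{2\pi\sqrt{-1}(h,\alpha)}e^\alpha$. A direct computation shows
\[
\sigma_h\,\theta\,\sigma_h^{-1}(e^\alpha)=e^{-4\pi\sqrt{-1}(h,\alpha)}\,\theta(e^\alpha),
\]
so conjugation by $\sigma_h$ modifies the difference character of $\theta$ by $\alpha\mapsto e^{-4\pi\sqrt{-1}(h,\alpha)}$. Taking $h=k/4$ with $k\in L^*$ makes this character equal to $\alpha\mapsto(-1)^{(k,\alpha)}$.

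Finally, the $\Z$-bilinear pairing $L^*\times L\to\Z$ reduces modulo $2$ to a $\Z/2\Z$-bilinear pairing $L^*/2L^*\times L/2L\to\Z/2\Z$, which is nondegenerate because $(L^*)^*=L$ under the standard duality; a dimension count over $\Z/2\Z$ then yields $L^*/2L^*\cong\Hom(L/2L,\Z/2\Z)=\Hom(L,\{\pm1\})$. Hence every character $\chi$ from the first step equals $\alpha\mapsto(-1)^{(k,\alpha)}$ for some $k\in L^*$, and the corresponding $\sigma_{k/4}$ conjugates $\theta$ to $\theta'$. The main subtlety is the first step, which relies on the standard description of the kernel of $\Aut(V_L)\to O(L)$ as a torus of characters acting on the twisted group algebra; given this, the remaining steps amount to elementary bilinear algebra over $\Z/2\Z$.
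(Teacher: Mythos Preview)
The paper does not give its own proof of this proposition; it simply quotes the result from \cite[Theorem~D.6]{DGH}. Your overall strategy---two lifts differ by a torus element, and conjugation by $\sigma_h$ moves one lift to the other---is precisely the standard argument used there.

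There is, however, a genuine error in your first step. You assert that the involutivity $\theta^2=(\theta')^2=1$ forces $\chi(\alpha)^2=1$. This is false: writing $\theta(e^\alpha)=c_\alpha e^{-\alpha}$, a direct computation gives
\[
(\theta')^2(e^\alpha)=\chi(\alpha)\chi(-\alpha)\,c_\alpha c_{-\alpha}\,e^\alpha=e^\alpha
\]
for \emph{every} character $\chi\in\Hom(L,\C^\times)$, since $\chi(-\alpha)=\chi(\alpha)^{-1}$ and $c_\alpha c_{-\alpha}=1$. Thus any $\C^\times$-valued character yields an involutive lift, and your third paragraph, which only realizes the $\{\pm1\}$-valued characters via $h=k/4$ with $k\in L^*$, does not cover all cases.

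The fix is immediate and actually simplifies your argument: just let $h$ range over all of $\h$. Then $\alpha\mapsto e^{-4\pi\sqrt{-1}(h,\alpha)}$ already exhausts $\Hom(L,\C^\times)$, and your conjugation formula from the second paragraph finishes the proof. The $\Z/2\Z$ bilinear-algebra step becomes unnecessary. (If one instead reads ``lift'' in the stricter sense used in \cite{DGH}, namely an automorphism of the central extension $\hat{L}$ covering $-1\in O(L)$, then the difference $\chi$ is $\{\pm1\}$-valued \emph{by construction of the extension} and your final step is adequate; but the reason is not involutivity.)
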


\section{Uniqueness of automorphisms of Lie algebras}
\subsection{Automorphisms of Lie algebras}
In this subsection, we recall some facts about automorphisms of simple Lie algebras from \cite{H} and \cite{DGM}.
Let $\mathfrak{s}$ be a~finite dimensional  simple Lie algebra of rank $n$ with
a Cartan subalgebra $\h$ of $\mathfrak{s}$.
For automorphisms $g$ and $g'$ of $\s$, we write $g \sim g'$ if $g$ is conjugate to $g'$ in $\Aut(\s)$.
Let $[x]$ denote the maximum integer less than or equal to a~real number $x$.
The following proposition can be found in \cite[Theorem~6.1, TABLE II and pp.513--515]{H}.

\begin{proposition}\label{sec:classify1}
Let $\sigma_1$ and $\sigma_2$ be automorphisms of $\mathfrak{s}$ of order $2$. Then $\sigma_1 \sim \sigma_2$ if and only if the Lie algebra $\mathfrak{s}^{\sigma_1}$
is isomorphic to $\mathfrak{s}^{\sigma_2}$.
Moreover, if $\sigma$ is an automorphism of $\s$ of order $2$
and $\s^\sigma$ is semisimple,
then the fixed point Lie algebra $\mathfrak{s}^\sigma$ is given by the following: 

\noindent 
{\bf (1)} $(A_{2n})^\sigma\cong B_n$ {\rm(}$n\geq 1${\rm)},
{\bf (2)} $(A_{2n+1})^\sigma\cong C_{n+1}$ or $D_{n+1}$ {\rm(}$n\geq 2${\rm)},
{\bf (3)} $(B_n)^\sigma\cong B_{n-p}\oplus D_p$ {\rm(}$n\geq 3$, $2\leq p\leq n${\rm)},
{\bf (4)} $(C_n)^\sigma\cong C_p\oplus C_{n-p}$, {\rm(}$n\geq 2$, $1\leq p\leq [n/2]${\rm)},
{\bf (5)} $(D_{n})^\sigma\cong D_p\oplus D_{n-p}$ {\rm(}$n\geq 4$, $2\leq p\leq [n/2]${\rm)}
and $(D_n)^\sigma\cong B_p\oplus B_{n-p-1}$
{\rm(}$n\geq 3$, $0\leq p\leq [(n-1)/2]${\rm)},
{\bf (6)} $(E_6)^\sigma\cong F_4$, $C_4$ or $A_1\oplus A_5$,
{\bf (7)} $(E_7)^\sigma\cong A_7$ or $A_1\oplus D_6$,
{\bf (8)} $(E_8)^\sigma\cong D_8$ or $A_1\oplus E_7$,
{\bf (9)} $(F_4)^\sigma\cong B_4$ or $A_1\oplus C_3$,
{\bf (10)} $(G_2)^\sigma\cong A_1\oplus A_1$.
\end{proposition}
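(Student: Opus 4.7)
The plan is to appeal to Kac's classification of finite-order automorphisms of complex simple Lie algebras, which is the content of~\cite[Ch.~X]{H}. Recall that any automorphism of order $N$ of $\s$ is conjugate in $\Aut(\s)$ to one of the form $\exp(2\pi\sqrt{-1}\,\mathrm{ad}\, x/N)$ twisted by a diagram automorphism of order $r\in\{1,2,3\}$. Such automorphisms are parametrized by labelings $(s_0,s_1,\dots,s_n)$ of the nodes of the (possibly twisted) affine Dynkin diagram $X_n^{(r)}$ of $\s$ satisfying $r\sum a_i s_i=N$ and $\gcd(s_0,\dots,s_n)=1$, where the $a_i$ are the marks; two labelings yield conjugate automorphisms precisely when they differ by a symmetry of the affine diagram. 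Specializing to $N=2$, only finitely many labelings arise: either a single $s_i=2$ on a node of mark $1$, or two $s_i=1$'s on nodes of mark $1$, with all remaining entries zero.

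With this framework in hand, I would proceed type by type. For each simple Lie algebra of type $A_n$, $B_n$, $C_n$, $D_n$, $E_n$, $F_4$ or $G_2$, I would first write down the affine diagrams $X_n^{(1)}$ and, when applicable, $X_n^{(2)}$. Then I would enumerate the admissible order-$2$ labelings. For each, the fixed point subalgebra $\s^\sigma$ is read off as the semisimple subalgebra whose Dynkin diagram consists of the nodes labeled $0$, together with an abelian center whose dimension equals the number of nonzero labels minus one. Discarding the labelings for which this center is nonzero yields exactly the candidates for a semisimple $\s^\sigma$, which after accounting for diagram symmetries should match items (1)--(10) of the statement.

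The conjugacy assertion then follows from the same classification: the "only if" direction is automatic, and for the "if" direction I would check, case by case, that each semisimple fixed point subalgebra in~(1)--(10) corresponds to a single labeling up to symmetry of the affine diagram. The main obstacle is to rule out hidden coincidences between inner and outer involutions, i.e., to verify that no labeling of $X_n^{(1)}$ and labeling of $X_n^{(2)}$ give abstractly isomorphic semisimple fixed point subalgebras beyond those already recorded. This can be controlled using the dimension identity $\dim \s^\sigma+\dim \s^{-\sigma}=\dim \s$, which strongly restricts possible coincidences, together with a direct comparison of root system data; inspecting \cite[TABLE~II]{H} type by type then completes the verification.
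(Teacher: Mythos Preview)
The paper does not prove this proposition; it is simply attributed to \cite[Theorem~6.1, TABLE~II and pp.~513--515]{H}. Your proposal is, in effect, a sketch of how that reference (equivalently, Kac's classification) establishes the result, so you are supplying what the paper defers to Helgason.

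One slip in your enumeration deserves correction. For the untwisted case $r=1$ with $N=2$, the labeling ``a single $s_i=2$ on a node of mark $1$'' has $\gcd(s_0,\dots,s_n)=2$ and therefore does not parametrize an automorphism of order exactly $2$ in Kac's normalization. The correct list of inner order-$2$ labelings is: (i) a single $s_i=1$ on a node with $a_i=2$, or (ii) two distinct $s_i=1$'s on nodes with $a_i=1$, all other $s_j$ vanishing. Case (i) is not a technicality---it is exactly what produces, for instance, $(E_8)^\sigma\cong D_8$ and $(E_8)^\sigma\cong A_1\oplus E_7$, since the affine $E_8^{(1)}$ diagram has a unique node of mark $1$. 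With this fixed, the rest of your plan---reading off $\s^\sigma$ from the zero-labeled subdiagram plus center, discarding the non-semisimple cases, and verifying by direct inspection of \cite[TABLE~II]{H} that no two surviving labelings (inner or outer, up to diagram symmetry) yield isomorphic fixed subalgebras---is correct and is precisely the content of the cited pages.
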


Let $\theta\in\mathrm{Aut}(\mathfrak{s})$ be a~lift of the $(-1)$-automorphism of $\mathfrak{h}$. Then we have
\begin{proposition}\label{sec:theta} {\rm(cf.~\cite{DGM})}
If $\mathfrak{s}$ is simply-laced, then the fixed point Lie algebra $\mathfrak{s}^\theta$ is given by the following:\\
{\bf (1)} $(A_{2n})^\theta\cong B_n$,
{\bf (2)} $(A_{2n+1})^\theta\cong D_{n+1}$,
{\bf (3)} $(D_{2n})^\theta\cong D_n^2$,
{\bf (4)} $(D_{2n+1})^\theta \cong B_n^2$,
{\bf (5)}~$(E_6)^\theta\cong C_4$,
{\bf (6)} $(E_7)^\theta\cong A_7$,
{\bf (7)} $(E_8)^\theta\cong D_8$.
\end{proposition}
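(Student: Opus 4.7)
The plan is to combine the classification of order-two automorphisms in Proposition~\ref{sec:classify1} with a direct dimension count. By Proposition~\ref{sec:lift}, any two lifts of the $(-1)$-isometry of $L$ are conjugate in $\Aut(V_L)$, so the induced conjugacy class of the involution $\theta$ on $\mathfrak{s}=(V_L)_1$ is independent of the choice of lift and we may work with any convenient one. Since $\theta|_{\mathfrak{h}}=-1$ and $\theta^2=\mathrm{id}$, the involution $\theta$ interchanges the one-dimensional root spaces $\mathfrak{s}_{\alpha}$ and $\mathfrak{s}_{-\alpha}$ for every root $\alpha$, so the fixed subspace in each such pair is one-dimensional. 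Combined with $\mathfrak{h}^{\theta}=0$, this yields
\[
\dim \mathfrak{s}^{\theta}=|\Phi^{+}|,
\]
the number of positive roots of $\mathfrak{s}$.

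Next I would verify that $\mathfrak{s}^{\theta}$ is semisimple so that Proposition~\ref{sec:classify1} applies. This is a standard consequence of identifying $\theta$ as the Cartan involution of the split real form of $\mathfrak{s}$: for each simply-laced simple type the maximal compact subalgebra of the split real form is semisimple, and its complexification is precisely $\mathfrak{s}^{\theta}$. Given this, the type of $\mathfrak{s}^{\theta}$ is pinned down by comparing $|\Phi^{+}|$ with the dimensions of the semisimple candidates listed in Proposition~\ref{sec:classify1}. For the exceptional types a single dimension check is conclusive: $|\Phi^{+}|(E_{6})=36=\dim C_{4}$ (versus $\dim F_{4}=52$ and $\dim(A_{1}\oplus A_{5})=38$); $|\Phi^{+}|(E_{7})=63=\dim A_{7}$ (versus $\dim(A_{1}\oplus D_{6})=69$); and $|\Phi^{+}|(E_{8})=120=\dim D_{8}$ (versus $\dim(A_{1}\oplus E_{7})=136$). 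For type $A_{n}$ the only option in the list matching the dimension $\binom{n+1}{2}$ is $B_{n/2}$ when $n$ is even and $D_{(n+1)/2}$ (rather than $C_{(n+1)/2}$, whose dimension is strictly larger) when $n$ is odd.

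The only case requiring nontrivial bookkeeping is type $D_{m}$. Here Proposition~\ref{sec:classify1} offers two infinite families of candidates with dimensions $\dim(D_{p}\oplus D_{m-p})=2(p^{2}+(m-p)^{2})-m$ and $\dim(B_{p}\oplus B_{m-p-1})=2(p^{2}+(m-p-1)^{2})+(m-1)$. Setting each equal to $|\Phi^{+}|(D_{m})=m(m-1)$ reduces to a perfect-square identity: $(p-m/2)^{2}=0$ in the first family and $(p-(m-1)/2)^{2}=0$ in the second. Exactly one admits an integer solution depending on the parity of $m$: even $m=2n$ forces $p=n$ and yields $D_{n}^{2}$, while odd $m=2n+1$ forces $p=n$ and yields $B_{n}^{2}$. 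I expect the semisimplicity verification and this $D_{m}$ case analysis to be the main obstacles; the remaining cases follow directly from the single formula $\dim \mathfrak{s}^{\theta}=|\Phi^{+}|$.
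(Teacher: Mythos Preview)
The paper does not supply a proof of this proposition: it is stated with a citation to \cite{DGM} and used as a known input. Your proposal therefore provides an argument where the paper gives none, and the strategy---compute $\dim\mathfrak{s}^{\theta}=|\Phi^{+}|$ from $\theta|_{\mathfrak{h}}=-1$ and then match against the finite list in Proposition~\ref{sec:classify1}---is correct and efficient. The root-space argument is sound: since $\theta$ is a Lie algebra automorphism negating $\mathfrak{h}$, it sends $\mathfrak{s}_{\alpha}$ to $\mathfrak{s}_{-\alpha}$, so on each two-dimensional block $\mathfrak{s}_{\alpha}\oplus\mathfrak{s}_{-\alpha}$ it has eigenvalues $\pm 1$ with multiplicity one each. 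Your dimension checks for the exceptional types and for $A_{n}$ are accurate, and the quadratic analysis in the $D_{m}$ case correctly isolates $p=m/2$ (even $m$) and $p=(m-1)/2$ (odd $m$) as the unique integer solutions.

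Two small remarks. First, the semisimplicity step is indeed the one place you are importing outside input; your identification of $\theta$ with the Cartan involution of the split real form is the standard route, but note that for $A_{1}$ the maximal compact $\mathfrak{so}(2)$ is abelian, so you should make explicit that the ranges in the proposition (e.g.\ $n\geq 1$) exclude that degenerate case. Second, Proposition~\ref{sec:classify1} carries its own range restrictions (e.g.\ $A_{2n+1}$ for $n\geq 2$, $D_{n}$ for $n\geq 3$ or $4$), so for the small cases $A_{3}$, $D_{3}$, $D_{4}$ you are implicitly relying on the identifications $D_{2}=A_{1}^{2}$, $D_{3}=A_{3}$ used elsewhere in the paper; it would be cleaner to say so.
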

\smallskip

\subsection{Uniqueness of automorphisms of Lie algebras}\label{sec:uniqueLie}
In this subsection, we will prove that the conjugacy class of the automorphism $\theta$  of the Niemeier lattice VOA is uniquely determined by the Lie algebra structure of the fixed-point weight one subspace. This will be used to verify the condition (c) in Subsection \ref{reverse} in the proof of Theorem \ref{sec:cor1}. 

Let $V$ be a strongly regular simple VOA and $g$ an automorphism of $V$ of order $2$.
Assume that the Lie algebra $V_1$ is semisimple and let $V_1=\bigoplus_{i=1}^p\mathfrak{s}_{(i),\ell_i}$ be the
decomposition of $V_1$ into the sum of simple ideals $\mathfrak{s}_{(1)},\ldots,\s_{(p)}$ with levels $\ell_1,\ldots,\ell_p$, respectively.
Then $g$ acts on $\{\mathfrak{s}_{(i)}\,|\,1\leq i\leq p\}$ as a permutation.
Without loss of generality, we may assume that there exists a non-negative integer $q$ such that $2q\leq p$,
$g(\mathfrak{s}_{(i)})=\mathfrak{s}_{(i+q)}$ if $1\leq i\leq q$,
$g(\mathfrak{s}_{(i)})=\mathfrak{s}_{(i-q)}$ if $q+1\leq i\leq 2q$ and
$g(\mathfrak{s}_{(i)})=\mathfrak{s}_{(i)}$ if $2q+1\leq i \leq p$.
The following result can be established by the same argument as in~\cite{LS16}.

\begin{proposition}[cf.~{\cite[Proposition 3.7]{LS16}}]\label{sec:decomp1}
The fixed point Lie algebra $V_1^g$ is isomorphic to a sum of ideals
$$
\bigoplus_{i=1}^q(\mathfrak{s}_{(i)}\oplus \mathfrak{s}_{(i+q)})^g\oplus \bigoplus_{i=2q+1}^p\mathfrak{s}_{(i)}^g.
$$
For any $1\leq i\leq q$,
we have $\ell_i=\ell_{i+q}$.
In addition, the Lie subalgebra $(\mathfrak{s}_{(i)}\oplus \mathfrak{s}_{(i+q)})^g$ is
a~simple ideal of $V_1^g$ isomorphic to $\mathfrak{s}_{(i)}$
and its level is $2\ell_i$.
\end{proposition}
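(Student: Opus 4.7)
The plan is to decompose $V_1^g$ according to the $g$-orbits on the set of simple ideals $\{\s_{(i)}\,|\,1\le i\le p\}$ and to analyse each orbit type separately. Since the ideals $\s_{(i)}$ pairwise commute and $g$ permutes them as described---fixing $\s_{(i)}$ for $2q+1\le i\le p$, and swapping $\s_{(i)}$ with $\s_{(i+q)}$ for $1\le i\le q$---the fixed-point Lie algebra immediately splits as the claimed direct sum of subspaces, each summand being an ideal of $V_1^g$ since it is the intersection of $V_1^g$ with a $g$-stable ideal of $V_1$.

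For a swapped pair with $1\le i\le q$, I would introduce the linear map
$$\varphi_i\colon \s_{(i)}\longrightarrow (\s_{(i)}\oplus\s_{(i+q)})^g,\qquad x\mapsto x+g(x).$$
Since $g|_{\s_{(i)}}\colon\s_{(i)}\to\s_{(i+q)}$ is a Lie algebra isomorphism between simple ideals, $\varphi_i$ is a linear bijection onto its target. Using $[\s_{(i)},\s_{(i+q)}]=0$, a direct computation gives
$$[\varphi_i(x),\varphi_i(y)] = [x,y] + g([x,y]) = \varphi_i([x,y]),$$
so $\varphi_i$ is a Lie algebra isomorphism, and in particular $(\s_{(i)}\oplus\s_{(i+q)})^g$ is a simple ideal of $V_1^g$ isomorphic to $\s_{(i)}$.

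For the level, I would invoke that $g\in\Aut(V)$ preserves the invariant form $\la\cdot|\cdot\ra$ recalled in Section~2 and that distinct simple ideals of $V_1$ are orthogonal under this form. Since a Lie algebra isomorphism between simple Lie algebras identifies their normalized Killing forms, the identity $\la g(x)|g(y)\ra=\la x|y\ra$ on $\s_{(i)}$ combined with the normalizations $\la\cdot|\cdot\ra|_{\s_{(j)}}=\ell_j(\cdot|\cdot)_{\s_{(j)}}$ forces $\ell_i=\ell_{i+q}$. Then for $x,y\in\s_{(i)}$ one computes
$$\la\varphi_i(x)|\varphi_i(y)\ra = \la x|y\ra + \la g(x)|g(y)\ra = 2\ell_i\,(x|y)_{\s_{(i)}},$$
and transporting through $\varphi_i$ (which preserves normalized Killing forms) shows that $\la\cdot|\cdot\ra$ restricted to the simple subalgebra $(\s_{(i)}\oplus\s_{(i+q)})^g\subset V_1$ equals $2\ell_i$ times its normalized Killing form, so its level inside $V$ is $2\ell_i$. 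The whole argument is fundamentally bookkeeping; the only step demanding genuine care is this level computation, where the diagonal embedding must be matched against the form identifications from Section~2, but once that is done the proposition follows.
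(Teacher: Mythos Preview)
Your argument is correct and complete. The paper does not actually supply a proof of this proposition; it simply records that the result ``can be established by the same argument as in~\cite[Proposition~3.7]{LS16}.'' What you have written is precisely that standard argument: split $V_1$ into $g$-stable blocks according to the orbit structure on the simple ideals, identify the diagonal in each swapped pair via $x\mapsto x+g(x)$, and read off the level from the invariant form using $\la\cdot|\cdot\ra|_{\s}=k(\cdot|\cdot)_{\s}$ together with $g$-invariance of $\la\cdot|\cdot\ra$ and orthogonality of distinct simple ideals. There is nothing to add; your treatment of the level computation is exactly the point that requires care, and you handle it correctly.
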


Let $\mathfrak{f}$ be a~semisimple Lie algebra with the decomposition
$\k=\bigoplus_{i=1}^t\k_{(j)}$ into simple ideals.
Let $\sigma$ be an automorphism of $\mathfrak{f}$ of order $2$ and
$\mathfrak{f}^\sigma=\bigoplus_{i=1}^s \mathfrak{g}_{(i)}$ the decomposition of
$\k^\sigma$ into simple ideals. 

\begin{corollary}\label{sec:decomp2}
If $\mathfrak{f}_{(j)}\not \cong \mathfrak{g}_{(i)}$ for all $1\leq j\leq t$ and $1\leq i\leq s$,
then $\mathfrak{f}^\sigma$ decomposes into the sum of ideals $\mathfrak{f}^\sigma=\bigoplus_{j=1}^t\mathfrak{f}_{(j)}^\sigma$.
\end{corollary}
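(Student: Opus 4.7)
The plan is to argue by contradiction, using Proposition~\ref{sec:decomp1} (which, although stated there for the weight one Lie algebra of a VOA, depends only on the Lie-algebraic structure of the action of an order-$2$ automorphism on a semisimple Lie algebra). First I would observe that, since $\sigma$ is an automorphism of $\mathfrak{f}$, it permutes the set of simple ideals $\{\mathfrak{f}_{(1)},\ldots,\mathfrak{f}_{(t)}\}$, and because $\sigma$ has order $2$ this permutation is an involution. After reindexing, there exists $q\geq 0$ with $2q\leq t$ such that $\sigma$ swaps $\mathfrak{f}_{(j)}$ and $\mathfrak{f}_{(j+q)}$ for $1\leq j\leq q$, while fixing $\mathfrak{f}_{(j)}$ setwise for $2q+1\leq j\leq t$.

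Next, I would apply Proposition~\ref{sec:decomp1} to obtain the decomposition
\[
\mathfrak{f}^\sigma \;\cong\; \bigoplus_{j=1}^{q}(\mathfrak{f}_{(j)}\oplus \mathfrak{f}_{(j+q)})^\sigma \;\oplus\; \bigoplus_{j=2q+1}^{t} \mathfrak{f}_{(j)}^\sigma,
\]
where each summand $(\mathfrak{f}_{(j)}\oplus \mathfrak{f}_{(j+q)})^\sigma$ with $1\leq j\leq q$ is a simple ideal of $\mathfrak{f}^\sigma$ isomorphic to $\mathfrak{f}_{(j)}$. In particular, if $q\geq 1$, then this simple ideal appears as one of the $\mathfrak{g}_{(i)}$ in the decomposition $\mathfrak{f}^\sigma=\bigoplus_{i=1}^s \mathfrak{g}_{(i)}$, and it is isomorphic to the simple factor $\mathfrak{f}_{(j)}$ of $\mathfrak{f}$. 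This contradicts the hypothesis that $\mathfrak{f}_{(j)}\not\cong \mathfrak{g}_{(i)}$ for all $j$ and $i$.

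Hence $q=0$, which means that $\sigma$ preserves every simple ideal $\mathfrak{f}_{(j)}$ setwise. Since the ideals are mutually orthogonal with respect to the Killing form and $\sigma$ stabilises each one, taking fixed points distributes over the direct sum, giving $\mathfrak{f}^\sigma=\bigoplus_{j=1}^{t}\mathfrak{f}_{(j)}^\sigma$, as desired. This argument is essentially mechanical once Proposition~\ref{sec:decomp1} is in place; the only step requiring any care is the reindexing that puts $\sigma$ into the standard swap-plus-fix form, and the observation that the ``isomorphic to $\mathfrak{f}_{(j)}$'' conclusion of Proposition~\ref{sec:decomp1} is exactly what the contrapositive hypothesis rules out.
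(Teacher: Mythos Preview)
Your argument is correct, and the overall logical structure---reduce to the permutation-on-simple-ideals picture, invoke Proposition~\ref{sec:decomp1} to see that any genuinely swapped pair contributes a simple ideal of $\mathfrak{f}^\sigma$ isomorphic to a simple factor of $\mathfrak{f}$, and derive a contradiction---is exactly what the paper is doing.

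The one point of divergence is how Proposition~\ref{sec:decomp1} gets invoked. You assert that the proposition, though phrased for the weight one space of a VOA, ``depends only on the Lie-algebraic structure'' and then apply it directly to $(\mathfrak{f},\sigma)$. The paper instead takes the more literal route: it builds the VOA $V=\bigotimes_{j=1}^t L_{\mathfrak{f}_{(j)}}(1,0)$, observes that $\sigma$ lifts to an order-$2$ automorphism $g$ of $V$ with $g|_{V_1}=\sigma$, and then applies Proposition~\ref{sec:decomp1} verbatim to $(V,g)$. Your way is more elementary and avoids the detour through affine VOAs, but it puts the burden on you to have checked that the parts of Proposition~\ref{sec:decomp1} you need (the orbit decomposition of $V_1^g$ and the fact that a swapped pair gives a diagonal copy of $\mathfrak{s}_{(i)}$) really are purely Lie-algebraic. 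They are, and this is easy to see directly, but as written you are asserting it rather than proving it. The paper's approach trades that small verification for a one-line VOA construction, so that Proposition~\ref{sec:decomp1} applies as stated with no side conditions to revisit.
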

\begin{proof}
Let $V=\bigotimes_{j=1}^t L_{\mathfrak{f}_j}(1,0)$.
It then follows that $\sigma$ induces an~order $2$ automorphism $g$ of $V$ such that
$g|_{V_1}=\sigma$.
Since $V$ is a strongly regular VOA  and $V_1\cong \mathfrak{f}$, we can get the result by Proposition~\ref{sec:decomp1}.
\end{proof}

We are now ready to describe our  main result in this subsection.
We consider a pair of Lie algebras $(\g, \k)$ as in Table \ref{table1}.


\begin{table}[bht]
\caption{Lie algebras $(\g, \k)$.}\label{table1}
\begin{tabular}{|l|c|c|c|}
\hline
Cases & $\g$ & $\k$ & \\ \hline
(A) ($n|12$) & $B_{n,2}^{12/n}$ & $A_{2n,1}^{12/n}$ &
\\
(B) ($n|4$)  & $D_{2n,2}^{4/n}B_{n,1}^{8/n}$   &  $A_{4n-1,1}^{4/n}D_{2n+1,1}^{4/n}$ &
\\ 
(C) ($n|4$)  & $D_{2n+1,2}^{4/n}A_{2n-1,1}^{4/n}$ & $A_{4n+1,1}^{4/n}X_{n,1}$ &
($X_1= D_{4}$, $X_2=D_{6}$, $X_4=E_{7})$ \\ 
(D) & $C_{4,1}^4$ & $E_{6,1}^4$ & \\ 
(E) & $D_{6,2}B_{3,1}^2C_{4,1}$ & $A_{11,1}D_{7,1}E_{6,1}$& \\ \hline
\end{tabular}
\medskip

Here $n|N$ denotes the condition that the positive integer $n$ divides $N$. We also use the identifications:   $D_{2,k}=A_{1,k}^2$, $D_{3,k}=A_{3,k}$ and $B_{1,k}=A_{1,2k}$.
\end{table}

The following is the main result of this subsection. 

\begin{theorem}\label{main1}
Let $(\mathfrak{g}, \mathfrak{f})$ be a pair of semisimple Lie algebras listed in Table \ref{table1}. Then any automorphism $\sigma$ of $\k$ of order $2$ such that $\mathfrak{f}^\sigma\cong \mathfrak{g}$
is conjugate to $\theta$ in $\Aut(\mathfrak{f})$.
\end{theorem}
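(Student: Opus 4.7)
My plan is to argue uniformly for every pair $(\g,\k)$ in Table~\ref{table1} by reducing $\sigma$ to its restrictions to the simple ideals of $\k$ and then invoking the classifications in Propositions~\ref{sec:classify1} and~\ref{sec:theta}. Write $\k=\bigoplus_j\k_{(j)}$ for the decomposition into simple ideals. The first (and crucial) observation, obtained by direct inspection of Table~\ref{table1}, is that in every line no simple summand of $\g$ is isomorphic to any simple summand of $\k$: the two lists differ either in Cartan--Killing type or in rank (for instance $A_{2n}\not\cong B_n$ in~(A), $A_{4n-1},\ D_{2n+1}$ vs.\ $D_{2n},\ B_n$ in~(B), $A_{4n+1},\ X_n$ vs.\ $D_{2n+1},\ A_{2n-1}$ in~(C), $E_6\not\cong C_4$ in~(D), and likewise for the singleton case~(E)).

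From this observation and Proposition~\ref{sec:decomp1}, $\sigma$ cannot permute any two simple factors of $\k$ nontrivially: if $\sigma(\k_{(i)})=\k_{(i')}$ with $i\ne i'$, the diagonal $(\k_{(i)}\oplus\k_{(i')})^\sigma\cong\k_{(i)}$ would occur as a simple summand of $\k^\sigma\cong\g$, contradicting the above. Hence $\sigma$ preserves each $\k_{(j)}$ and, by Corollary~\ref{sec:decomp2}, $\g\cong\bigoplus_j \k_{(j)}^{\sigma|_{\k_{(j)}}}$. Moreover $\sigma|_{\k_{(j)}}$ is nontrivial (otherwise $\k_{(j)}$ itself would be a summand of $\g$), and hence of order $2$. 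For each simple $\k_{(j)}$ the available isomorphism types of $\k_{(j)}^{\sigma|_{\k_{(j)}}}$ are exactly those listed in Proposition~\ref{sec:classify1}; matching the decomposition of $\g$ against this list determines $\k_{(j)}^{\sigma|_{\k_{(j)}}}$ uniquely in every case, e.g.\ $(A_{2n})^\sigma\cong B_n$ in~(A); $(A_{4n-1})^\sigma\cong D_{2n}$ (not $C_{2n}$, which does not occur in $\g$) and $(D_{2n+1})^\sigma\cong B_n^2$ in~(B); $(A_{4n+1})^\sigma\cong D_{2n+1}$ and $(X_n)^\sigma\cong A_{2n-1}^{4/n}$ in~(C); $(E_6)^\sigma\cong C_4$ in~(D); $(A_{11})^\sigma\cong D_6$, $(D_7)^\sigma\cong B_3^2$, $(E_6)^\sigma\cong C_4$ in~(E). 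Proposition~\ref{sec:classify1} then makes $\sigma|_{\k_{(j)}}$ unique up to conjugacy in $\Aut(\k_{(j)})$.

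Finally, by Proposition~\ref{sec:theta}, $\theta|_{\k_{(j)}}$ has exactly the fixed-point subalgebra found above for $\sigma|_{\k_{(j)}}$ in each case (this is the core match: $(A_{2n})^\theta\cong B_n$, $(A_{2n+1})^\theta\cong D_{n+1}$, $(D_{2n})^\theta\cong D_n^2$, $(D_{2n+1})^\theta\cong B_n^2$, $(E_6)^\theta\cong C_4$, $(E_7)^\theta\cong A_7$). Therefore $\sigma|_{\k_{(j)}}\sim\theta|_{\k_{(j)}}$ in $\Aut(\k_{(j)})$ for every $j$, and assembling these conjugators into an automorphism of $\k=\bigoplus_j\k_{(j)}$ (composed, if necessary, with a permutation of isomorphic factors to align the decompositions of $\g$) yields $\sigma\sim\theta$ in $\Aut(\k)$. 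The only real work is the finite bookkeeping in the middle step, where one must verify for each row of Table~\ref{table1} that the decomposition of $\g$ rules out the alternative options of Proposition~\ref{sec:classify1}; I expect this matching (rather than any conceptual difficulty) to be the principal labour.
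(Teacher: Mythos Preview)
Your proposal is correct and follows essentially the same route as the paper's own proof: both reduce via Corollary~\ref{sec:decomp2} (whose hypothesis you verify explicitly, namely that no simple factor of $\g$ is isomorphic to a simple factor of $\k$) to the restrictions $\sigma|_{\k_{(j)}}$, and then pin these down using Propositions~\ref{sec:classify1} and~\ref{sec:theta} by a case-by-case match against the simple summands of $\g$. Your write-up is somewhat more uniform in presentation, but the substance and the order of the argument (e.g.\ ruling out $C_{2n}$ in favour of $D_{2n}$ for the $A_{4n-1}$-factor in~(B), and identifying $(X_n)^\sigma\cong A_{2n-1}^{4/n}$ in~(C)) is the same as in the paper.
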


\pf
By using Corollary~3.4, we see that $\k^\sigma=\bigoplus_{j=1}^t \k_{(j)}^\sigma$.
We prove the assertion in a~case-by-case basis.

\noindent
{\bf (A)} Let $(\g,\k)=(B_{n,2}^{12/n}, A_{2n,1}^{12/n})$,
where $n\in\mathbb{Z}_{>0}$ and $n|12$.
 It then follows by Propositions~\ref{sec:classify1} and~\ref{sec:theta} that
 $A_{2n}^\sigma \cong B_n$ and $\sigma$ is conjugate to $\theta$.

\noindent
{\bf (B)} Let $(\g,\k)=(D_{2n,2}^{4/n}B_{n,1}^{8/n},A_{4n-1,1}^{4/n}D_{2n+1,1}^{4/n})$,
 where $n\in\mathbb{Z}_{>0}$ and $n|4$.
 If $n=1$, it follows that $\g=A_{1,2}^{16}$ and $\k=D_3^8=A_3^8$.
 By Propositions~\ref{sec:classify1} and~\ref{sec:theta}, we have $\k^\sigma\cong A_{1,2}^{16}$ and $\sigma\sim \theta$.
Suppose that $n=2$ or $4$.
 By Proposition~\ref{sec:classify1}, we see that $D_{2n+1}^\sigma\cong B_n^2$,
 and $\sigma|_{D_{2n+1}}$ is unique up to conjugate.
 Therefore, $(A_{4n-1}^\sigma)^{4/n}\cong D_{2n}^{4/n}$, and $\sigma|_{A_{4n-1}}$ is unique
 up to conjugate.
 Finally,  we have $\sigma\sim \theta$  by Proposition~\ref{sec:theta}.

 \noindent
 {\bf (C)} Let $(\g,\k)=(D_{2n+1,2}^{4/n}A_{2n-1,1}^{4/n},
 A_{4n+1,1}^{4/n}X_{n,1})$, where $n\in\mathbb{Z}_{>0}$, $n|4$,
 $X_1=D_4$, $X_2=D_6$ and $X_4=E_7$.
 It then follows that $A_{4n+1}^\sigma\cong D_{2n+1}$ and $\sigma|_{A_{4n+1}}\sim \theta$.
Moreover, $X_n^\sigma\cong A_{2n-1}^{4/n}$, and
 $\sigma\sim \theta$ by Propositions~\ref{sec:classify1} and~\ref{sec:theta}.

 \noindent
 {\bf (D)} Let $(\g,\k)=(C_{4,1}^4,E_{6,1}^4)$.
In a~similar way, we see that $E_6^\sigma=C_4$ and $\sigma|_{E_6}\sim\theta$. Hence, $\sigma$ is conjugate to $\theta$.

 \noindent
 {\bf (E)} Let $(\mathfrak{g},\mathfrak{f})=(D_{6,2}B_{3,1}^2C_{4,1}, A_{11,1}D_{7,1}E_{6,1})$.
By Proposition~\ref{sec:classify1}, we see that $A_{11}^\sigma\cong D_6$ and
$\sigma|_{A_{11}}$ is conjugate to $\theta$.
Similarly, $E_6^\sigma \cong C_4$ and $\sigma|_{E_6}\sim \theta$.
Therefore, we have $D_7^\sigma \cong B_3^2$, and hence $\sigma|_{D_7}\sim \theta$.
Thus, $\sigma$ is conjugate to $\theta$.
\qed

\vskip.25cm

Combining Proposition~\ref{sec:lift} and Theorem \ref{main1}, we immediately obtain the second main result in this subsection.
\begin{theorem}\label{main2}
Let $(\mathfrak{g}, \mathfrak{f})$ be one of the pairs of semisimple Lie algebras listed in Table \ref{table1}, $N(\k)$ be the Niemeier lattice such that $(V_{N(\k)})_1=\k$, $\theta$ be the automorphism of $V_{N(\k)}$ defined above. Then any automorphism $\mu$ of $V_{N(\k)}$ of order $2$
such that $(V_{N(\k)})_1^\mu\cong \g$
is conjugate to $\theta$ under $\Aut(V_{N(\k)})$.
\end{theorem}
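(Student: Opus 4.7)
The strategy is to reduce the theorem to Theorem~\ref{main1} and Proposition~\ref{sec:lift} by a Cartan-chasing argument: restrict $\mu$ to the weight-one Lie algebra, conjugate within $\Aut(V_{N(\mathfrak{f})})$ so that $\mu$ acts as $-1$ on a fixed Cartan subalgebra, and then recognize $\mu$ as a lift of the $(-1)$-isometry of $N(\mathfrak{f})$.

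To begin, set $\mathfrak{f} = (V_{N(\mathfrak{f})})_1$ and consider $\mu|_{\mathfrak{f}}$. It is an automorphism of $\mathfrak{f}$ of order dividing $2$ whose fixed-point subalgebra is $\mathfrak{f}^{\mu|_{\mathfrak{f}}} = (V_{N(\mathfrak{f})})_1^{\mu} \cong \mathfrak{g}$. Since $\mathfrak{g} \not\cong \mathfrak{f}$ in every case of Table~\ref{table1} (as is immediate from ranks or dimensions), $\mu|_{\mathfrak{f}}$ has order exactly $2$. Theorem~\ref{main1} then produces $\phi \in \Aut(\mathfrak{f})$ with $\phi \circ \mu|_{\mathfrak{f}} \circ \phi^{-1} = \theta|_{\mathfrak{f}}$. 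By the definition of $\theta$ in~(\ref{theta}), the automorphism $\theta$ acts as $-\mathrm{id}$ on a fixed Cartan subalgebra $\mathfrak{h}$ of $\mathfrak{f}$, so $\mu|_{\mathfrak{f}}$ acts as $-\mathrm{id}$ on the Cartan $\phi^{-1}(\mathfrak{h})$.

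Next I would use inner automorphisms of $\mathfrak{f}$ to standardize the Cartan. Since $\mathrm{Inn}(\mathfrak{f})$ acts transitively on Cartan subalgebras, there exists $\psi \in \mathrm{Inn}(\mathfrak{f})$ with $\psi(\phi^{-1}(\mathfrak{h})) = \mathfrak{h}$; writing $\psi$ as a product of factors of the form $\exp(\mathrm{ad}\, v)$ with $v \in \mathfrak{f} \subset V_1$, one obtains a lift $\tilde\psi = \prod \exp(v_{(0)}) \in \Aut(V_{N(\mathfrak{f})})$. Replacing $\mu$ by $\tilde\psi \mu \tilde\psi^{-1}$, we may assume that $\mu$ itself acts as $-\mathrm{id}$ on $\mathfrak{h}$. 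Because $\mathfrak{h} = \C \otimes_\Z N(\mathfrak{f})$ and $\mu$ preserves the conformal-weight grading, the $\mathfrak{h}$-weight decomposition of $V_{N(\mathfrak{f})}$ then forces $\mu(e^\alpha) \in \C\, e^{-\alpha}$ for every $\alpha \in N(\mathfrak{f})$, since $e^\alpha$ spans the minimal-conformal-weight piece of the $\alpha$-weight space $M(1) \otimes e^\alpha$. Hence $\mu$ is a lift of the $(-1)$-isometry of $N(\mathfrak{f})$, and Proposition~\ref{sec:lift} immediately yields that $\mu$ is conjugate to $\theta$ in $\Aut(V_{N(\mathfrak{f})})$.

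The one subtle point is the lifting step: the element $\phi \in \Aut(\mathfrak{f})$ supplied by Theorem~\ref{main1} may well be outer, and outer automorphisms of $\mathfrak{f}$ do not automatically extend to the lattice VOA. The trick is that one never has to lift $\phi$; it suffices to lift the inner automorphism $\psi$ needed to realign the transported Cartan $\phi^{-1}(\mathfrak{h})$ back to $\mathfrak{h}$, and inner automorphisms always lift through the correspondence $v \mapsto \exp(v_{(0)})$. Once $\mu$ has been arranged to act as $-\mathrm{id}$ on the standard Cartan, Proposition~\ref{sec:lift} absorbs all remaining ambiguity, and nothing further is needed.
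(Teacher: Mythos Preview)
Your proof is correct and follows exactly the approach the paper has in mind: the paper's own argument is the single sentence ``Combining Proposition~\ref{sec:lift} and Theorem~\ref{main1}, we immediately obtain the second main result,'' and what you have written is precisely the bridge that makes this combination work. Your handling of the one genuine subtlety---that the conjugating element $\phi$ from Theorem~\ref{main1} need not lift to $\Aut(V_{N(\mathfrak{f})})$, but that only the inner automorphism realigning Cartans has to---is the right observation, and the remainder (that $\mu$ then normalizes $M(1)$ and induces $-1$ on $\mathfrak{h}=\C\otimes_\Z N(\mathfrak{f})$, hence is a lift of the $(-1)$-isometry) is exactly what is needed to invoke Proposition~\ref{sec:lift}.
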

\section{Uniqueness of holomorphic VOAs of central charge 24}\label{sec}

\subsection{Conformal weights of twisted modules of affine VOAs}
To apply the ``reverse orbifold construction" method on a holomorphic VOA $V$, we need to choose an appropriate semisimple element $h\in V_1$. One of the restrictions on $h$ is about the conformal weights of the irreducible $\sigma_h$-twisted $V$-modules. In this subsection, we will prove some results about  conformal weights of $\sigma_h$-twisted $V$-modules.

First, we recall some facts about simple Lie algebras. Let $\mathfrak{s}$ be a finite dimensional simple Lie algebra
and $\h$ a~Cartan subalgebra of $\s$
with the simple roots $\al_1, \dots, \al_n$ and fundamental weights
$\varpi_1,\ldots,\varpi_n$ labelled as in~\cite{Bou}.
The highest root of $\s$ is denoted by $\theta_0$.
Let $(\cdot|\cdot)$ be  the normalized Killing form of $\s$ so that $(\alpha|\alpha)=2$ for any long root $\alpha$.
We identify $\h$ and $\h^*$ via $(\cdot|\cdot)$.
A~vector $v\in \s$ has $\s$-weight $\lambda\in\h$ if
$[x,v]=(x|\lambda)v$ for any $x\in\h$, where $[\cdot,\cdot]$ is the Lie bracket of $\s$.
The set of the dominant integral weights of $\mathfrak{s}$ is denoted by $P^+(\s)$.
For any positive integer $k$, we denote by
$
P^+(\mathfrak{s},k)= \{ \lambda \in P^+(\s)\mid  ( \lambda|\theta_0) \leq k\}
$
the set of all dominant integral weights of $\mathfrak{s}$ with the level $k$.

For a~dominant integral weight  $\lambda$ of $\s$, let $L(\lambda)$ be the irreducible $\s$-module with highest weight $\lambda$.
We denote by $\Pi(\lambda)$ the set of all weights of $L(\lambda)$.
Let $i$ be a~node of the Dynkin diagram of $\s$.

\begin{lemma}\label{sec:min1}
{\bf(1)} If $\s$ is of type $D_{2n}$, then
$\min\{(\varpi_i|\mu)\,|\,\mu\in \Pi(\lambda)\}=-(\varpi_i|\lambda)$.
{\bf (2)} If $i$ is fixed by any diagram automorphism of the Dynkin diagram of $\s$,
then $\min\{(\varpi_i|\mu)\,|\,\mu\in \Pi(\lambda)\}=-(\varpi_i|\lambda)$.
\end{lemma}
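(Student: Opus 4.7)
The plan is to reduce both parts to a single symmetry principle: if there exists an element $w$ of the Weyl group $W$ of $\s$ with $w(\varpi_i) = -\varpi_i$, then $W$-invariance of the bilinear form and of the weight set $\Pi(\lambda)$ gives
\[
(\varpi_i|\mu) \;=\; (w\varpi_i|w\mu) \;=\; -(\varpi_i|w\mu),
\]
so the set $\{(\varpi_i|\mu)\mid \mu\in \Pi(\lambda)\}$ is symmetric about $0$ and its minimum equals the negative of its maximum. The lemma thus splits into computing that maximum and exhibiting such a $w$ in each case.

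For the maximum, I would use the standard fact that every weight of $L(\lambda)$ has the form $\mu=\lambda-\sum_j c_j\alpha_j$ with $c_j\in\Z_{\ge 0}$. Since $\varpi_i$ is dominant, $(\varpi_i|\alpha_j)\ge 0$ for every $j$, hence
\[
(\varpi_i|\lambda) - (\varpi_i|\mu) \;=\; \sum_{j} c_j(\varpi_i|\alpha_j)\;\ge\; 0,
\]
with equality at $\mu=\lambda$. This yields $\max\{(\varpi_i|\mu)\mid\mu\in\Pi(\lambda)\}=(\varpi_i|\lambda)$.

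It then remains to produce $w\in W$ with $w(\varpi_i) = -\varpi_i$. For part (1), I would invoke the classical fact that in type $D_n$ the longest Weyl element $w_0$ acts as $-1$ on $\h^*$ precisely when $n$ is even; thus for $\s = D_{2n}$ we get $w_0(\varpi_i) = -\varpi_i$ for \emph{every} node $i$. For part (2), I would use the standard identification of $-w_0$ with a (possibly trivial) diagram automorphism of the Dynkin diagram, determined by the duality $L(\varpi_j)^*\cong L(-w_0(\varpi_j))$. The hypothesis that $i$ is fixed by every diagram automorphism of $\s$ then forces, in particular, $-w_0(\varpi_i)=\varpi_i$, i.e.\ $w_0(\varpi_i)=-\varpi_i$. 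This treats uniformly the self-dual types $B_n,C_n,D_{2n},E_7,E_8,F_4,G_2$ (where $-w_0$ is the identity on the diagram) and the non-self-dual types $A_n,D_{2n+1},E_6$ (where $-w_0$ is the unique nontrivial diagram involution). In every case the symmetry principle above completes the proof.

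All the ingredients are entirely classical, so no real obstacle is expected; the only bookkeeping is recalling the identification of $-w_0$ with the canonical duality involution of the nodes, together with the criterion $w_0=-1 \iff n$ even in type $D_n$.
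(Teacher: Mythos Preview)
Your proof is correct and follows essentially the same route as the paper's: both arguments hinge on the facts that $w_0=-1$ in type $D_{2n}$ and that $-w_0$ is a diagram automorphism, so $w_0(\varpi_i)=-\varpi_i$ under either hypothesis. The only cosmetic difference is that the paper computes the minimum directly as $(\varpi_i\mid w_0\lambda)=(w_0\varpi_i\mid\lambda)=-(\varpi_i\mid\lambda)$ using that $w_0$ is an involution, whereas you first establish symmetry of the value set and then compute the maximum; these are two ways of writing the same calculation.
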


\begin{proof}
Let $w_0$ be the longest element of the Weyl group of $\s$.
Since the lowest weight of $L(\lambda)$ is $w_0(\lambda)$ and $\varpi_i$ is a~dominant weight,
it follows that $\min\{(\varpi_i|\mu)\,|\,\mu\in \Pi(\lambda)\}=(\varpi_i|w_0(\lambda))$. In the case that $\s$ is of type $D_{2n}$, it is known that $w_0$ is equal to $-1$ (see \cite{Hu}), which shows (1).
Suppose that $i$ is fixed by any diagram automorphism of $\s$.
We see that the automorphism $-w_0$ is
(the standard lift of) a~diagram automorphism
as it permutes positive simple roots of $\s$ and preserves the inner product.
Since $w_0$ is an~involution,
it follows that $(\varpi_i|w_0(\lambda))=(w_0(\varpi_i)|\lambda)=-(\varpi_i|\lambda)$. Thus, we obtain (2), as desired.
\end{proof}

We next recall some facts about affine VOAs. Let $k$ be a positive integer and $L_{\s}(k,0)$ be the affine VOA associated to $\s$ with level $k$. It is known \cite{FZ} that $L_{\s}(k,0)$ is a~strongly regular VOA and
the set of all irreducible modules over $L_\s(k,0)$ up to isomorphisms
is given by $\{L_\s(k,\lambda)\,|\,\lambda\in P^+(\s,k)\}$,
where $L_\s(k,\lambda)$ is the irreducible $L_\s(k,0)$-module of $\s$-weight $\lambda$.

%


Consider the VOA $W=\bigotimes_{i=1}^t L_{\mathfrak{g}_{(i)}}(k_i,0)$, where $k_1, ..., k_t$ are positive integers.
Then any irreducible $W$-module is isomorphic to $\bigotimes_{i=1}^t L_{\mathfrak{g}_{(i)}}(k_i,\lambda_i)$ with $\lambda_i\in P^+(\mathfrak{g}_{(i)},k_i)$ for each $1\leq i\leq t$.
Let $h=(h_1,h_2,\ldots,h_t)$ be a~semisimple element of $W_1$ such that $(h|\alpha)\geq -1$ for any root $\alpha$ of $W_1$ and the spectrum ${\rm Spec}(h_{(0)})$ of $h_{(0)}:W\rightarrow W$
  is contained in $(1/T)\Z$ for some positive integer $T$. Then we know that $\sigma_h$ is an inner automorphism of $W$ such that $\sigma_{h}^T=1$. Moreover,
  for each $W$-module $M$, it is proved in \cite{Li} that $\left(M^{(h)}, Y_M^{(h)}(\cdot,z)\right):=\left(M, Y_M(\Delta(h, z)\cdot, z)\right)$ is a~$\sigma_h$-twisted $W$-module, where $Y_M(\cdot, z)$ is the vertex operator map of $M$ and
  $\Delta(h, z)=z^{h_{(0)}}\text{exp}\Bigl(\sum_{n=1}^{\infty}\frac{h_{( n)}}{- n}(-z)^{- n}\Bigr).$

\begin{lemma}[{\cite[Lemma 2.7]{LS16}}]\label{w(l)} Set $\bm{P}_{\g}=P^+(\mathfrak{g}_{(1)},k_1)\times \cdots\times P^+(\mathfrak{g}_{(t)},k_t)$ and let $\bm{\lambda}=(\lambda_1,\lambda_2,\ldots,\lambda_t)$ be an~element
of $\bm{P}_\g$.
Then the lowest conformal weight of $(\bigotimes_{i=1}^t L_{\mathfrak{g}_{(i)}}(k_i,\lambda_i))^{(h)}$ is equal to
$
w(\bm{\lambda})=\ell(\bm{\lambda})+\sum_{i=1}^t \min\{(h_i|\mu)\,|\,\mu\in \Pi(\lambda_i)\}+\langle h|h\rangle/2,
$
where $\ell(\bm{\lambda})$ is the lowest conformal weight of $\bigotimes_{i=1}^t L_{\mathfrak{g}_{(i)}}(k_i,\lambda_i)$ and $\Pi(\lambda_i)$ is the set of all weights of the irreducible
$\mathfrak{g}_{(i)}$-module $L(\lambda_i)$ with the highest weight $\lambda_i$.
\end{lemma}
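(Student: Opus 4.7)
The plan is to apply Li's shift construction and then minimize the resulting $L(0)$-eigenvalue on a PBW basis of the module. By~\cite{Li}, the hypothesis $\mathrm{Spec}(h_{(0)})\subset (1/T)\Z$ ensures that $\sigma_h$ has finite order and that $(M,Y_M(\Delta(h,z)\cdot,z))$ is a genuine $\sigma_h$-twisted $W$-module for any $W$-module $M$. Denote the new conformal weight operator by $L^{(h)}(0)$, i.e.\ the coefficient of $z^{-2}$ in $Y_M(\Delta(h,z)\omega,z)$.

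The core computation is the shift formula
\[
L^{(h)}(0) \;=\; L(0) + h_{(0)} + \tfrac{1}{2}\langle h|h\rangle\,\mathrm{id}.
\]
To derive it, I would exploit that $h$ is a primary weight-one Cartan vector: skew-symmetry gives $h_{(n)}\omega = 0$ for $n \neq 1$ and $h_{(1)}\omega = h$, while $h_{(0)}h = 0$ and $h_{(1)}h = \langle h|h\rangle \mathbf{1}$. Expanding the exponential in $\Delta(h,z)$ and applying it to $\omega$ then gives $\Delta(h,z)\omega = \omega + h\,z^{-1} + \tfrac{1}{2}\langle h|h\rangle\,\mathbf{1}\,z^{-2}$, and reading the $z^{-2}$-coefficient of $Y_M(\Delta(h,z)\omega,z)$ yields the displayed shift.

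Given the shift, minimizing $L^{(h)}(0)$ on $M = \bigotimes_{i=1}^t L_{\g_{(i)}}(k_i,\lambda_i)$ reduces to minimizing $L(0) + h_{(0)}$. A PBW basis of $M$ consists of monomials $X_1(-n_1)\cdots X_r(-n_r)\otimes v$ with each $n_j \geq 1$ and $v$ in the lowest weight space $\bigotimes_i L(\lambda_i)$ of $\g$-weight $(\mu_1,\ldots,\mu_t)$, with each $X_j$ either a Cartan element or a root vector $e_{\alpha_j}$. Such a monomial has $L(0)$-eigenvalue $\ell(\bm{\lambda}) + \sum_j n_j$ and $h_{(0)}$-eigenvalue $\sum_i (h_i|\mu_i) + \sum_{j\colon X_j = e_{\alpha_j}} (h|\alpha_j)$. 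Cartan factors contribute $n_j \geq 1$ to $L(0)+h_{(0)}$, while root factors contribute $n_j + (h|\alpha_j) \geq 1 + (-1) = 0$ by the hypothesis $(h|\alpha) \geq -1$. Hence the minimum of $L(0) + h_{(0)}$ is attained on the lowest weight space with the $\mu_i$ chosen to minimize $(h_i|\mu_i)$ on $\Pi(\lambda_i)$, which after adding $\tfrac{1}{2}\langle h|h\rangle$ produces exactly $w(\bm{\lambda})$.

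The main obstacle I anticipate is merely the bookkeeping of sign and normalization conventions in the identity $h_{(1)}h = \langle h|h\rangle\mathbf{1}$, so that the shift comes out to $+\tfrac{1}{2}\langle h|h\rangle$ (and not its negative) consistent with the form $\langle\cdot|\cdot\rangle$ of the excerpt; once that is pinned down, the PBW estimate is elementary and the minimization decouples cleanly over the tensor factors.
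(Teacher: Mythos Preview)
The paper does not give its own proof of this lemma; it is quoted from \cite[Lemma~2.7]{LS16} and used as a black box. Your argument is correct and is essentially the standard one that appears in the cited reference: the shift identity $L^{(h)}(0)=L(0)+h_{(0)}+\tfrac12\langle h|h\rangle$ follows from $\Delta(h,z)\omega=\omega+h\,z^{-1}+\tfrac12\langle h|h\rangle\,\mathbf{1}\,z^{-2}$, and the PBW estimate together with the hypothesis $(h|\alpha)\geq -1$ forces the minimum of $L(0)+h_{(0)}$ to occur already on the top component $\bigotimes_i L(\lambda_i)$, where it equals $\ell(\bm{\lambda})+\sum_i\min_{\mu\in\Pi(\lambda_i)}(h_i|\mu)$.

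Your worry about the sign convention is legitimate but harmless here: with the normalization $\langle \mathbf{1}|\mathbf{1}\rangle=-1$ adopted in the paper (and in \cite{LS15,LS16}) one checks via the invariance condition that $h_{(1)}h=\langle h|h\rangle\,\mathbf{1}$ on the nose, so the constant shift is $+\tfrac12\langle h|h\rangle$ as stated. One small point worth making explicit in your write-up: the PBW monomials only \emph{span} the simple quotient $L_{\g_{(i)}}(k_i,\lambda_i)$ rather than forming a basis, but since each monomial is a simultaneous eigenvector for $L(0)$ and $h_{(0)}$, every eigenvalue of $L(0)+h_{(0)}$ on the module is already realized on some nonzero monomial, which is all the minimization needs.
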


We now let $V$ be a strongly regular, holomorphic VOA such that $V_1$ is semisimple, $h=(h_1,h_2,\ldots,h_t)$ be a~semisimple element of $V_1$ such that $(h|\alpha)\geq -1$ for any root $\alpha$ of $V_1$ and the spectrum ${\rm Spec}(h_{(0)})$ of $h_{(0)}:V\rightarrow V$
  is contained in $(1/T)\Z$ for some positive integer $T$. Then we know that $\sigma_h$ is an inner automorphism of $V$ of finite order. Assume that $V_1=\g\cong \g_{(1)}\oplus \cdots\oplus \g_{(t)}$ for some simple Lie algebras $\g_{(1)}, \cdots, \g_{(t)}$. For each $\bm{\lambda}\in \bm{P}_\g$, set $d(\bm{\lambda})=w(\bm{\lambda})-\ell(\bm{\lambda})$ .
 Write $\bm{0}=(0,0,\ldots,0)\in\bm{P}_\g$ and $L(\bm{\lambda})=\bigotimes_{i=1}^t L_{\mathfrak{g}_{(i)}}(k_i,\lambda_i)$.
We then have
\begin{lemma}\label{sec:difference}
Assume $d(\bm{\lambda})>-3/2$ for any $\bm{\lambda}\in \bm{P}_{\g}$ and $d(\bm{0})>1/2$.
Then the lowest conformal weight of $V^\mathrm{T}(\sigma_h)$ is greater than $1/2$. In particular, $V^\mathrm{T}(\sigma_h)_{1/2}=0$.
\end{lemma}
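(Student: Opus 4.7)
The plan is to identify the lowest conformal weight of $V^\mathrm{T}(\sigma_h)$ with the lowest conformal weight of the $\sigma_h$-twisted $V$-module $V^{(h)}$ obtained from Li's $\Delta$-operator construction applied to $V$ itself, and then to control this weight through the decomposition of $V$ over the sub VOA $W=\bigotimes_{i=1}^t L_{\g_{(i)}}(k_i,0)$ generated by $V_1$.

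The first step is to use Proposition~\ref{sec:hvee}, which guarantees that $W$ and $V$ share the same Virasoro element, so the $L(0)$-grading on any $W$-submodule of $V$ agrees with the grading inherited from $V$. Rationality of $W$ then yields a decomposition
\begin{equation*}
V=\bigoplus_{\bm{\lambda}\in \bm{P}_\g} m_{\bm{\lambda}} L(\bm{\lambda}),\qquad m_{\bm{\lambda}}\in\Z_{\geq 0},
\end{equation*}
and since Li's construction is compatible with direct sums of $W$-modules, $V^{(h)}=\bigoplus_{\bm{\lambda}} m_{\bm{\lambda}} L(\bm{\lambda})^{(h)}$ as $\sigma_h|_W$-twisted $W$-modules. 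Because $V$ is holomorphic and $\sigma_h$ has finite order (by the assumption $\mathrm{Spec}(h_{(0)})\subset (1/T)\Z$), there is a unique irreducible $\sigma_h$-twisted $V$-module $V^\mathrm{T}(\sigma_h)$, and the $\sigma_h$-twisted $V$-module $V^{(h)}$ must be a direct sum of copies of it. Therefore, the lowest conformal weight of $V^\mathrm{T}(\sigma_h)$ coincides with that of $V^{(h)}$, and by Lemma~\ref{w(l)} this equals $\min\{w(\bm{\lambda})\mid m_{\bm{\lambda}}>0\}$.

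The second step is a lower bound on $w(\bm{\lambda})=\ell(\bm{\lambda})+d(\bm{\lambda})$ for each relevant $\bm{\lambda}$. For $\bm{\lambda}=\bm{0}$ we have $\ell(\bm{0})=0$, so $w(\bm{0})=d(\bm{0})>1/2$ by hypothesis. For $\bm{\lambda}\neq \bm{0}$ with $m_{\bm{\lambda}}>0$, a short multiplicity argument forces $\ell(\bm{\lambda})\geq 2$: indeed, the $\Z$-grading of $V$ makes $\ell(\bm{\lambda})$ a positive integer; moreover $V_0=\C\mathbf{1}$ forces $m_{\bm{0}}=1$, and then $L(\bm{0})_1=W_1=\g=V_1$ already exhausts $V_1$, leaving no room for any other $L(\bm{\lambda})$ with $\ell(\bm{\lambda})=1$ to appear. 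Combined with $d(\bm{\lambda})>-3/2$, this gives $w(\bm{\lambda})>2-3/2=1/2$, and taking the minimum completes the argument.

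The delicate step is the twisted-module identification in the first part: one must verify that Li's $\Delta$-construction applied to the adjoint module yields an honest $\sigma_h$-twisted $V$-module (not merely a twisted $W$-module), so that Dong--Li--Mason uniqueness applies and allows one to pull the lowest weight back from $V^{(h)}$ to $V^\mathrm{T}(\sigma_h)$. Once this is in place, the remainder of the proof is an elementary comparison of lower bounds.
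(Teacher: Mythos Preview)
Your proposal is correct and follows essentially the same route as the paper: decompose $V$ over the affine sub\,VOA generated by $V_1$, identify $V^{\mathrm{T}}(\sigma_h)$ with $V^{(h)}$ via Li's construction, and then bound $w(\bm{\lambda})=\ell(\bm{\lambda})+d(\bm{\lambda})$ using $\ell(\bm{\lambda})\geq 2$ for nonvacuum summands (from $\dim V_0=1$ and $V_1=\g$) together with the two hypotheses on $d$. The paper is terser---it simply cites \cite{Li} for the identification $V^{\mathrm{T}}(\sigma_h)=\bigoplus_j L(\bm{\lambda}^{(j)})^{(h)}$ rather than arguing via uniqueness of the irreducible twisted module---but the substance is the same.
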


\begin{proof}
Note that the sub VOA of $V$ generated by $V_1$ is isomorphic to $\bigotimes_{i=1}^t L_{\mathfrak{g}_{(i)}}(k_i,0)$ for some positive integers $k_1,..., k_t$. Thus,  $V$ viewed as
a~$\bigotimes_{i=1}^t L_{\mathfrak{g}_{(i)}}(k_i,0)$-module has the decomposition
$V\cong \bigoplus_{j=0}^n L(\bm{\lambda}^{(j)}),$
where $n\geq 0$, $\bm{\lambda}^{(0)},\ldots,\bm{\lambda}^{(n)}\in \bm{P}_{\g}$ and $\bm{\lambda}^{(j)}=\bm{0}$ if and only if $j=0$.
It then follows that $V^T(\sigma_h)=\bigoplus_{j=0}^n L(\bm{\lambda}^{(j)})^{(h)}$ (see \cite{Li}).
Therefore, it suffices to show $w(\bm{\lambda}^{(j)})>1/2$ for all $0\leq j\leq n$.
By assumption, $d(\bm{0})>1/2$ and $\ell(\bm{0})=0$; hence, we have $w(\bm{0})>1/2$.
Assume that $0<j\leq n$.
Since $V_1\cong \mathfrak{g}$ and $\dim V_0=1$, we have $\ell(\bm{\lambda}^{(j)})\geq 2$.
It follows immediately from the assumption $d(\bm{\lambda}^{(j)})>-3/2$ that $w(\bm{\lambda}^{(j)})>1/2$. The proof is complete.
\end{proof}

\subsection{Orbifold construction of holomorphic VOAs}
\label{sec:applyorbifold}
In this subsection, we begin to prove Theorem \ref{sec:cor1}. To make the statement of Theorem \ref{sec:cor1} more precise, we will prove the following theorem. 

\begin{theorem}\label{mainresult}
Let $( \g, \k)$ be a pair of Lie algebras listed in Table~\ref{table1}. Let $V$ be a strongly regular holomorphic VOA
 of central charge $24$ such that $V_1$ is isomorphic to $\g$. Then $V$ is isomorphic to the VOA
$\tilde{V}_{N(\k)}(\theta)$,
where  $N(\k)$ is the Niemeier lattice such that $(V_{N(\k)})_1=\k$ and $\theta$ is the automorphism of the lattice VOA ${V}_{N(\k)}$ defined as in (\ref{theta}).
\end{theorem}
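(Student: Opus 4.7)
The plan is to apply the reverse orbifold construction framework of Theorem~\ref{sec:unique1} with $U = V_{N(\k)}$ and $p=2$. For each of the 13 pairs $(\g,\k)$ listed in Table~\ref{table1}, the matching $\g \leftrightarrow \k$ is precisely the pattern produced by the $\theta$-orbifold on $V_{N(\k)}$, as predicted by Proposition~\ref{sec:theta} together with Proposition~\ref{sec:decomp1}; the strategy is to reverse this process on $V$.

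First, I would choose a semisimple element $h \in \h \subset V_1$, where $\h$ is a Cartan subalgebra of $V_1$, such that conditions (i)--(iii) of Subsection~\ref{formula} hold and $\sigma_h$ acts trivially on $V_1$. The natural candidate is dictated by the inverse of the decomposition in Proposition~\ref{sec:decomp1}: $h$ should be taken as a suitable $\Q$-combination of fundamental coweights of the level-$2$ simple factors of $\g$ so that $(h|\alpha) \in \Z$ for every root $\alpha$ of $V_1$ (making $\sigma_h|_{V_1} = \id$) and $\langle h|h\rangle$ is an even integer, yet $\sigma_h$ has order exactly~$2$ on $V$. The existence of such $h$ is checked case-by-case by explicit construction; Theorem~\ref{existence} then supplies the holomorphic orbifold $\tilde V(\sigma_h)$.

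Second, I would invoke Lemma~\ref{sec:difference} to show $V^{\mathrm T}(\sigma_h)_{1/2} = 0$, which reduces via Lemma~\ref{w(l)} to verifying $d(\bm{\lambda}) > -3/2$ for all $\bm{\lambda} \in \bm{P}_\g$ and $d(\bm 0) > 1/2$. Lemma~\ref{sec:min1} computes the required minima $\min\{(h_i|\mu) \mid \mu\in \Pi(\lambda_i)\}$ in the relevant simply-laced cases, so the inequalities collapse to a finite arithmetic check. Combined with $V_1^{\sigma_h} = V_1$, Equation~\eqref{eqn:montague} yields $\dim \tilde V(\sigma_h)_1 = 2\dim V_1 + 24$. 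The $V_1$-module decomposition of $V^{\mathrm T}(\sigma_h)_\Z$, read off from the $h_{(0)}$-grading, then identifies $\tilde V(\sigma_h)_1 \cong \k$ as a Lie algebra (matching against Schellekens' list), and Proposition~\ref{sec:Niemeier} upgrades this to an isomorphism $\tilde V(\sigma_h) \cong V_{N(\k)}$. Third, the induced automorphism $a = a_{V,\sigma_h}$ on $V_{N(\k)}$ has order~$2$ with $(V_{N(\k)})_1^a \cong V_1^{\sigma_h} = V_1 \cong \g$, so Theorem~\ref{main2} forces $a$ to be conjugate to $\theta$ in $\Aut(V_{N(\k)})$. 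This verifies conditions~(a), (b), and~(c) of Subsection~\ref{reverse}, and Theorem~\ref{sec:unique1} concludes $V \cong \tilde V_{N(\k)}(\theta)$.

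The main obstacle is the combination of the first two steps: one must choose $h$ satisfying several integrality and positivity conditions simultaneously, while ensuring both $d(\bm 0) > 1/2$ and $d(\bm{\lambda}) > -3/2$ for every $\bm{\lambda} \in \bm{P}_\g$ that actually appears in $V$. For the larger cases such as $B_{12,2}$, $D_{8,2}B_{4,1}^2$, $D_{9,2}A_{7,1}$, and $D_{6,2}B_{3,1}^2C_{4,1}$, the correct $h$ is essentially forced by the target Lie algebra $\k$, but verifying the twisted-module weight inequality requires a non-trivial enumeration of dominant weights at levels~$2$ and~$1$. Lemma~\ref{sec:difference} is the key technical device for taming this bookkeeping, and the proof would lean on it heavily in every case.
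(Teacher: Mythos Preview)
Your overall architecture is correct and matches the paper's: choose $h$ with $\sigma_h|_{V_1}=\id$, verify the orbifold conditions, identify $\tilde V(\sigma_h)$ with $V_{N(\k)}$, and invoke Theorems~\ref{sec:unique1} and~\ref{main2}. Two points, however, are genuine gaps rather than bookkeeping.

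First, you treat the statement ``$\sigma_h$ has order exactly $2$ on $V$'' as part of the explicit construction of $h$, but this cannot be read off from $h$ alone: $(h|\alpha)\in\Z$ for all roots only guarantees $\sigma_h|_{V_1}=\id$, and to get $\mathrm{Spec}(h_{(0)})\not\subset\Z$ you must exhibit a summand $L(\bm\lambda)\subset V$ with $(h|\bm\lambda)\in\tfrac12+\Z$. Since the decomposition of $V$ over the affine subVOA is not known a~priori (that is essentially what the theorem is about), this is the hardest step. The paper handles it in Lemma~\ref{condition} by a commutant argument: for most cases one shows $\com_V(\com_V(R))=R$ with $R$ the subVOA generated by the factor carrying $h$, proves $\com_V(R)$ is regular via Lemma~\ref{sec:hkl}, and then Lemma~\ref{sec:km} forces \emph{every} irreducible $R$-module to occur in $V$, including one with half-integral $(h|\lambda)$. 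The cases $B_{12,2}$ and $D_{8,2}B_{4,1}^2$ need separate ad~hoc arguments (a dimension count against the $j$-function, and a simple-current analysis, respectively). Your proposal misidentifies the main obstacle as the $d(\bm\lambda)$ inequalities, which are in fact a routine finite check once $h$ is fixed. (Incidentally, $h$ is not always supported on the level-$2$ factors: in cases (C) and (D) it sits in the level-$1$ factors $A_{2n-1,1}$ and $C_{4,1}$.)

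Second, your proposed identification of $\tilde V(\sigma_h)_1$ with $\k$ by ``reading off the $h_{(0)}$-grading'' is circular for the same reason: it presupposes knowledge of the $V_1$-module structure of $V$. The paper instead argues indirectly (Lemma~\ref{mainLemma}): from~\eqref{eqn:montague} one gets $\dim W_1$, Proposition~\ref{sec:hvee} gives the common ratio $h^\vee_i/\ell_i$, and then one uses that the reverse automorphism $a$ on $W$ satisfies $W_1^a\cong\g$ together with Propositions~\ref{sec:classify1} and~\ref{sec:decomp1} to eliminate all candidate semisimple types except $\k$; Proposition~\ref{sec:Niemeier} finishes. This backwards-from-$a$ argument is what makes the identification work without knowing $V$ explicitly.
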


%

Note that Theorem \ref{sec:cor1} follows immediately from Theorem \ref{mainresult}. We will prove Theorem \ref{mainresult} after several lemmas. Our idea is to apply the ``reverse orbifold construction" method on the holomorphic VOA $V$.
We start by choosing an appropriate semisimple element $h\in \g$.  Let $\g=\g_{(1),k_1}\oplus \cdots \oplus \g_{(t), k_t}$ be a semisimple Lie algebra listed ,
where  $\g_{(i),k_i}$'s are arranged in the same order as in Table \ref{table1}.
For example, if $\g=D_{6,2}B_{3,1}^2C_{4,1}$, then
$\g_{(1)}=D_6$, $\g_{(2)}=B_3$, $\g_{(3)}=B_3$ and $\g_{(4)}=C_4$.

\begin{table}[bht]
\caption{Choice of $h$.}\label{table 2}
\begin{tabular}{|l|c|c|}
\hline
Cases & $\g$ & $h$ \\ \hline
(A) ($n|12$) & $B_{n,2}^{12/n}$ & $(\varpi_1,0,\ldots,0)$ ($12/n-1$ times $0$'s)\\
(B) ($n|4$)  & $D_{2n,2}^{4/n}B_{n,1}^{8/n}$   &  $(\varpi_1,0,\ldots,0)$ ($12/n-1$ times $0$'s)\\
(C) ($n|4$)  & $D_{2n+1,2}^{4/n}A_{2n-1,1}^{4/n}$ & $(0,\ldots,0,\varpi_n,\ldots,\varpi_n)$ ($4/n$ times $0$'s and $\varpi_n$'s)
\\
(D) & $C_{4,1}^4$ & $(\varpi_4,0,0,0)$\\
(E) & $D_{6,2}B_{3,1}^2C_{4,1}$ & $(\varpi_1,0,0,0)$ \\
\hline
\end{tabular}
\end{table}

\begin{lemma}\label{condition}
Let $V$ and $(\g, \k)$ be as above and let $h$  be the semisimple element of $\g$ defined as in Table~\ref{table 2}, where  $\varpi_1$ of $D_2$ means the weight $(\varpi_1,\varpi_1)$ of $A_1^2$. Then $h$ satisfies $\langle h|h\rangle=2$, $(h| \bm{\lambda}) \in \frac{1}2 \Z$, $d(\bm{0})=1$ and $d(\bm{\lambda})>-3/2$ for any $\bm{\lambda}\in\bm{P}_{\mathfrak{g}}$ such that $\bm{\lambda}\neq \bm{0}$. Moreover, $\sigma_h$ is an automorphism of $V$ of order $2$ such that $\g^{\sigma_h}=\g$.
\end{lemma}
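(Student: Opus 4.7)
The plan is a direct case-by-case verification through the five entries of Table~\ref{table1}. In each case the vector $h$ is a single fundamental weight (or a block of $4/n$ copies of one), so all four numerical claims reduce to concrete arithmetic in classical root systems. The identity $\langle h|h\rangle=2$ follows from $\langle\cdot|\cdot\rangle|_{\mathfrak{s}}=k(\cdot|\cdot)_{\mathfrak{s}}$ together with the standard values of $(\varpi_i|\varpi_j)$ in each simple type: in every row the levels and multiplicities conspire to yield $2$ (e.g.\ $(\varpi_1|\varpi_1)_{B_n}=(\varpi_1|\varpi_1)_{D_n}=1$ at level $2$ in cases~(A),~(B),~(E); $4/n$ copies of $(\varpi_n|\varpi_n)_{A_{2n-1}}=n/2$ at level $1$ in case~(C); and $(\varpi_4|\varpi_4)_{C_4}=2$ at level $1$ in case~(D)). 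Half-integrality $(h|\bm{\lambda})\in\tfrac12\Z$ then follows from explicit weight-lattice coordinates (weights of $B_n, D_n$ have entries in $\tfrac12\Z$; in $A_{2n-1}$ the identity $(\varpi_n|\varpi_j)=\min(n,j)-nj/(2n)$ lies in $\tfrac12\Z$; in $C_4$ one has $(\varpi_4|\varpi_j)=j/2$). The value $d(\bm 0)=\langle h|h\rangle/2=1$ is then immediate.

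For $d(\bm\lambda)>-3/2$ when $\bm\lambda\neq\bm 0$, I would apply Lemma~\ref{sec:min1}: in every case the node carrying $h$ is fixed by $-w_0$ (since $w_0=-1$ for $B_n$, $D_{2n}$, $C_n$, and $A_1$, while the middle node of $A_{2n-1}$ with $n>1$ is the unique fixed node of the nontrivial diagram automorphism $-w_0$). Hence $\min\{(h_i|\mu):\mu\in\Pi(\lambda_i)\}=-(h_i|\lambda_i)$ and $d(\bm\lambda)=1-\sum_i(h_i|\lambda_i)$. A short enumeration of level-$k$ dominant integral weights in each factor yields $\sum_i(h_i|\lambda_i)\leq 2$, with equality attained at extremal configurations such as $2\varpi_1$ in a $B_{n\geq 2}$ or $D_{2n}$ factor, $4\varpi_1$ in the $A_{1,4}$ factor of case~(A) with $n=1$, the all-$\varpi_n$ choice for $A_{2n-1}^{4/n}$ in case~(C), or $\varpi_4$ in case~(D). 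Consequently $d(\bm\lambda)\geq -1>-3/2$.

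For the moreover statement, $(h|\alpha)\in\Z$ for every root $\alpha$ of $V_1$ is immediate from $h$ being a fundamental weight (pairings with simple roots lie in $\{0,1\}$), so $\sigma_h|_{V_1}=\operatorname{id}$ and $\g^{\sigma_h}=\g$. Combined with $\operatorname{Spec}(h_{(0)})\subset\tfrac12\Z$ this gives $\sigma_h^2=1$, so it remains to see $\sigma_h\neq 1$. Since $V$ is holomorphic of central charge~$24$ and the affine sub\,VOA $W=\bigotimes_i L_{\g_{(i)}}(k_i,0)$ generated by $V_1$ has the same central charge by Proposition~\ref{sec:hvee}, $V$ strictly extends $W$, and the decomposition of $V$ as a $W$-module must contain a simple summand $L(\bm\lambda)$ whose highest weight pairs to $\tfrac12+\Z$ with $h$ (for instance a spin representation of a $B_n$ or $D_{2n}$ factor, or $\varpi_n$ of an $A_{2n-1}$ factor in case~(C)), producing a half-integer eigenvalue of $h_{(0)}$. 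I expect that pinning down such a summand---verifying that the holomorphic extension forces a half-integer-weight module to appear in~$V$---will be the main obstacle; the remainder is routine enumeration combined with Lemma~\ref{sec:min1}.
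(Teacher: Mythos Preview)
Your treatment of the numerical claims $\langle h|h\rangle=2$, $(h|\bm\lambda)\in\tfrac12\Z$, $d(\bm 0)=1$, and $d(\bm\lambda)\geq -1$ is correct and matches the paper's approach: both you and the paper invoke Lemma~\ref{sec:min1} at precisely the nodes where $-w_0$ fixes $\varpi_i$, obtaining $d(\bm\lambda)=1-\sum_i(h_i|\lambda_i)$ and bounding this below by $-1$ via a short enumeration. Likewise, $\g^{\sigma_h}=\g$ follows in both treatments from the fact that each component of $h$ is a fundamental weight attached to a long simple root.

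The genuine gap is exactly the step you flag: proving $\sigma_h\neq 1$ on $V$. Your argument that $V$ strictly contains the affine sub\,VOA $W$ is fine, but it does \emph{not} by itself force a summand $L(\bm\lambda)$ with $(h|\bm\lambda)\in\tfrac12+\Z$ to appear; a priori the extension $V\supsetneq W$ could involve only integer-pairing weights. The paper does not argue this way. Instead it isolates the sub\,VOA $R$ generated by the simple factor(s) carrying $h$, proves that $\com_V(R)$ is strongly regular (via Lemma~\ref{sec:hkl} and \cite{ACKL}) and that $\com_V(\com_V(R))=R$, and then applies the Krauel--Miyamoto embedding theorem (Lemma~\ref{sec:km}) to conclude that \emph{every} irreducible $R$-module occurs inside $V$; since one such module has half-integral pairing with $h$, the order is $2$. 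This double-commutant argument does not go through uniformly: when $\g=B_{12,2}$ (where $R=V$) or $\g=D_{8,2}B_{4,1}^2$ (where $R$ has irreducible modules of integral conformal weight $\geq 2$, so $\com_V(\com_V(R))$ may properly contain $R$), the paper gives separate arguments---a direct $\dim V_2$ computation against the $j$-function in the first case, and a simple-current-extension analysis in the second. Your proposal would need to supply all of this machinery, or an alternative, to close the gap.
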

\pf
 Since $h$ is a~sum of
fundamental weights corresponding to long roots,
it follows immediately that $\g^{\sigma_h}=\g$.
By direct calculations, it is also easy to verify that $\langle h|h\rangle=2$ and
$(h | \lambda)\in \frac{1}2\Z$ for all $\lambda\in P^+(\g_{(i)},k)$ ($1\leq i\leq t$).
Moreover, for any $\bm{\lambda}=(\lambda_1,\lambda_2,\ldots,\lambda_t)\in \bm{P}_{\g}$, we have
$d(\bm{\lambda})=
-\sum_{i=1}^{t/2}(\varpi_n|\lambda_{t/2+i})+1$
if $\g$ is in case (C), and
$d(\bm{\lambda})=
-(h|\lambda_1)+1$ otherwise,
by using Lemma \ref{sec:min1}. It is now straightforward to show that $d(\bm{0})=1$ and $d(\bm{\lambda})>-3/2$ for any $\bm{\lambda}\in\bm{P}_{\g}$ such that $\bm{\lambda}\neq \bm{0}$.

Finally, we show that the order of $\sigma_h$ is $2$.
Set  $\mathfrak{r}=\g_{(t/2+1),k_{t/2+1}}\oplus\cdots\oplus \g_{(t),k_t}$
and $\mathfrak{s}=\g_{(1),k_1}\oplus \cdots\oplus \g_{(t/2),k_{t/2}}$ when $\g$ is in case (C), and set $\mathfrak{r}=\g_{(1),k_1}$ and $\mathfrak{s}=\g_{(2),k_2}\oplus\cdots\oplus \g_{(t),k_t}$ otherwise.
Then $h$ belongs to $\mathfrak{r}$.
Let $R$ and $S$ be the sub\,VOAs of $V$ generated by $\mathfrak{r}$ and
$\mathfrak{s}$, respectively.
It follows that $R$ and $S$ are strongly regular.
We divide the proof into $3$ parts: ${\bf (1^\circ)}$ $\g\neq B_{12,2}$ and $D_{8,2}B_{4,1}^2$,
${\bf (2^\circ)}$ $\g=B_{12,2}$ and ${\bf (3^\circ)}$ $\g=D_{8,2}B_{4,1}^2$.

\medskip\noindent
 ${\bf (1^\circ)}$ Suppose that $\g\neq B_{12,2}$ and $D_{8,2}B_{4,1}^2$.
It then follows that
 the lowest conformal weight of any irreducible
$R$-module does not belong to $\mathbb{Z}_{\geq 2}$.
Therefore, we have $\com_V(\com_V(R))=R$.
Since $T=\com_V(R)$ is an~extension of  $S$,
the VOA $T$ is $C_2$-cofinite and of CFT-type.
Since the commutant of a~rational simple sub\,VOA in a~rational simple VOA is
also simple (\cite{ACKL}),
by Lemma~\ref{sec:hkl}, $T$ is rational.
By applying Lemma~\ref{sec:km} to $R$ and $T$,
we see that
all the irreducible modules of $R$
must appear in $V$.
Since there exists an~irreducible $R$-module
of $\mathfrak{r}$-weight $\lambda$
such that $(\lambda|h)\in1/2+\mathbb{Z}$,
 the order of $\sigma_h$ is $2$.

 \medskip\noindent
 ${\bf (2^\circ)}$ Suppose that $\g=B_{12,2}$.
Then as a~module of $R\cong L_{B_{12}}(2,0)$, $V$ decomposes as
$$
V\cong L_{B_{12}}(2,0)\oplus\bigoplus_{i=1}^{12}a_i L_{B_{12}}(2,\varpi_i)
\oplus \bigoplus_{i,j\in\{1,12\},\,i\leq j}b_{ij} L_{B_{12}}(2,\varpi_i+\varpi_j)
$$
with non-negative integers $a_i$ ($1\leq i\leq 12$) and $b_{ij}$ ($i,j\in\{1,12\}$, $i\leq j$).
By computing the lowest conformal weights of the irreducible
$R$-modules, we see that
\begin{equation}\label{eqn:dioph}
\dim V_2=\dim (L_{B_{12}}(2,0))_2+a_5 \dim L(\varpi_5)+ b_{1,12} \dim L(\varpi_1+\varpi_{12}).
\end{equation}
Here, $L(\lambda)$ is the irreducible $B_{12}$-module of highest weight $\lambda$.
Since $V$ is a~holomorphic VOA of central charge $24$, the
character of $V$ coincides with $j(\tau)-744+\dim B_{12}$, where $j(\tau)$ is the $j$-function.
Therefore, we have $\dim V_2=196884$.
We also have $\dim (L_{B_{12}}(2,0))_2=45450$,
$\dim L(\varpi_5)=53130$ and $\dim L(\varpi_1+\varpi_{12})=98304$.
It then follows by~\eqref{eqn:dioph} that $a_5=b_{1,12}=1$.
Since $(\varpi_1+\varpi_{12}|h)=3/2$, we see that the order of
$\sigma_h$ is $2$.

\medskip\noindent
 ${\bf (3^\circ)}$ Suppose that $\g=D_{8,2}B_{4,1}^2$.
 The set of all irreducible $R=L_{D_8}(2,0)$-modules $M$ such that
 the lowest conformal weight of $M$ belongs to $\mathbb{Z}_{\geq 2}$
 consists of $L_{D_8}(2,2\varpi_7)$ and $L_{D_8}(2,2\varpi_8)$.
 They are self-dual simple current modules such that $L_{D_8}(2,2\varpi_7)\boxtimes L_{D_8}(2,2\varpi_8)\cong L_{D_8}(2,2\varpi_1)$,
 where $\boxtimes$ denotes the fusion product of $R$-modules.
Therefore, we see that either (i) $\com_V(\com_V(R))=R$ or
(ii) $\com_V(\com_V(R))=R\oplus L_{D_8}(2,2\varpi_i)$ ($i=7,8$) holds.
 If (i) holds, then by a~similar argument to ($1^\circ$) above, we see that
 $L_{D_8}(2,\varpi_8)$ is a~summand
  in the decomposition of $V$ as a~$R$-module.
 Since $(\varpi_8|h)=1/2$, the order of $\sigma_h$ is $2$.
Suppose that (ii) holds with $i=7$ or $8$.
  It then follows by the theory of simple current extensions
 that the irreducible $\com_V(\com_V(R))$-modules are given by
$R\oplus L_{D_8}(2,2\varpi_i)$,
$L_{D_8}(2,2\varpi_1)\oplus L_{D_8}(2,2\varpi_j)$,
$L_{D_8}(2,\varpi_i)^\pm$,
$L_{D_8}(2,\varpi_1+\varpi_j)^\pm$,
$L_{D_8}(2,\varpi_2)\oplus L_{D_8}(2,\varpi_6)$,
$L_{D_8}(2,\varpi_4)^\pm$,
 where $j$ satisfies $\{i,j\}=\{7,8\}$.
 In particular, $L_{D_8}(2,\varpi_i)^+\subset V$ as a~module of
 $\com_V(\com_V(R))$.
 By a similar argument as in ($1^\circ$), we see that both $\com_V(R)$ and $\com_V(\com_V(R))$ are
 regular.
Since $(h|\varpi_i)=1/2$,  by applying Lemma~\ref{sec:km} to $\com_V(\com_V(R))$ and $\com_V(R)$,
 we have shown that the order of $\sigma_h$ is $2$.
\qed

\begin{lemma}\label{lemma2}
Let $V$, $(\g, \k)$ and $h$ be as in Lemma \ref{condition}. Then  $(V,\sigma_h)$ satisfies the orbifold condition,
and $V^{\mathrm{T}}(\sigma_h)_{\frac{1}2}=0$.
\end{lemma}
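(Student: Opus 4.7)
The plan is to deduce this lemma as a direct compilation of Lemma \ref{condition} with Theorem \ref{existence} and Lemma \ref{sec:difference}. Concretely, I will verify conditions (i), (ii) and (iii) of Section \ref{formula} for the semisimple element $h$, apply Theorem \ref{existence} to obtain the orbifold condition for $(V,\sigma_h)$, and then invoke Lemma \ref{sec:difference} to derive the vanishing $V^{\mathrm{T}}(\sigma_h)_{1/2}=0$.

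For condition (i), I will use that $V$ decomposes as a direct sum of irreducibles $L(\bm{\lambda}^{(j)})$ over the affine sub\,VOA $\bigotimes_{i} L_{\g_{(i)}}(k_i,0)$; every eigenvalue of $h_{(0)}$ on such an irreducible is of the form $(h|\mu)$ for a weight $\mu$ of $L(\bm{\lambda}^{(j)})$, and differs from $(h|\bm{\lambda}^{(j)})$ by an integer combination of pairings $(h|\alpha_i)$ with simple roots $\alpha_i$. Together with the property $(h|\bm{\lambda})\in \frac{1}{2}\Z$ established in Lemma \ref{condition}, this places ${\rm Spec}(h_{(0)}) \subset \frac{1}{2}\Z$. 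The fact, also proved in Lemma \ref{condition}, that $\sigma_h = \exp(-2\pi\sqrt{-1}h_{(0)})$ has order exactly $2$ forces at least one eigenvalue in $\frac{1}{2}+\Z$, so ${\rm Spec}(h_{(0)}) \not\subset \Z$. Condition (ii) is immediate from $\la h|h \ra = 2 \in \Z$.

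For condition (iii), and simultaneously for the second assertion of the lemma, I will apply Lemma \ref{sec:difference}. Its hypotheses $d(\bm{0}) > 1/2$ and $d(\bm{\lambda}) > -3/2$ for every $\bm{\lambda}\in \bm{P}_\g$ are precisely the inequalities already supplied by Lemma \ref{condition} (with $d(\bm{0})=1$). Lemma \ref{sec:difference} then guarantees that the lowest conformal weight of $V^{\mathrm{T}}(\sigma_h)$ is strictly greater than $1/2$. In particular this weight is positive, so condition (iii) holds; Theorem \ref{existence} then delivers the orbifold condition for $(V,\sigma_h)$, while the same bound yields $V^{\mathrm{T}}(\sigma_h)_{1/2}=0$.

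Since the genuinely non-trivial work is all already contained in Lemma \ref{condition}, I do not expect any real obstacle here. The only care required is in matching the hypotheses of Theorem \ref{existence} and Lemma \ref{sec:difference} against the precise conclusions of Lemma \ref{condition}, and this alignment is immediate in each of the cases (A)--(E).
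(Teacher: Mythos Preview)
Your proposal is correct and follows essentially the same route as the paper: verify conditions (i)--(iii) of Section~\ref{formula} from Lemma~\ref{condition} and Lemma~\ref{sec:difference}, apply Theorem~\ref{existence} for the orbifold condition, and read off $V^{\mathrm{T}}(\sigma_h)_{1/2}=0$ from Lemma~\ref{sec:difference}. One small simplification: your weight-lattice argument for ${\rm Spec}(h_{(0)})\subset\frac{1}{2}\Z$ implicitly needs $(h|\alpha_i)\in\Z$ for all simple roots (true here, but not stated), whereas the full condition~(i) already follows immediately from the statement in Lemma~\ref{condition} that $\sigma_h$ has order exactly~$2$ on $V$.
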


\pf By Lemmas \ref{condition} and \ref{sec:difference}, $h$ satisfies the conditions (i), (ii) and (iii) in Subsection \ref{formula}. Hence,  by Theorem \ref{existence}, $(V,\sigma_h)$ satisfies the orbifold condition.
Moreover, we have $V^{\mathrm{T}}(\sigma_h)_{1/2}=0$ by Lemma~\ref{sec:difference}.
\qed

By Lemmas \ref{condition} and \ref{lemma2}, we have the strongly regular holomorphic VOA $\tilde{V}(\sigma_h)$ of central charge $24$.
%

\begin{lemma}\label{mainLemma}
Let $V$, $(\g, \k)$ and $h$ be as in Lemma \ref{condition}.
Then the holomorphic VOA
 $\tilde{V}(\sigma_h)$ is isomorphic to $V_{N(\k)}$.
\end{lemma}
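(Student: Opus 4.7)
The plan is to identify the weight-one Lie algebra $\tilde{V}(\sigma_h)_1$ with $\mathfrak{f}$, after which Proposition \ref{sec:Niemeier} immediately yields $\tilde{V}(\sigma_h)\cong V_{N(\mathfrak{f})}$. So the entire game reduces to pinning down $\tilde{V}(\sigma_h)_1$ as a semisimple Lie algebra with prescribed levels.

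First, I would apply the dimension formula \eqref{eqn:montague}. By Lemma \ref{condition} we have $V^{\sigma_h}_1=V_1=\mathfrak{g}$, and by Lemma \ref{lemma2} we have $V^{\mathrm{T}}(\sigma_h)_{1/2}=0$. Hence
\[
\dim \tilde{V}(\sigma_h)_1=2\dim \mathfrak{g}+24.
\]
A case-by-case arithmetic check against Table \ref{table1} shows that this number equals $\dim \mathfrak{f}$ in every case. In particular $\tilde{V}(\sigma_h)_1\ne 0$ and in fact $\dim \tilde{V}(\sigma_h)_1>24$, so by Proposition \ref{sec:hvee} the Lie algebra $\tilde{V}(\sigma_h)_1$ is semisimple.

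Second, I would exploit the reverse-orbifold involution. Consider $a=a_{V,\sigma_h}\in\mathrm{Aut}(\tilde{V}(\sigma_h))$ introduced in Subsection \ref{reverse}. By construction $\tilde{V}(\sigma_h)^a=V^{\sigma_h}$, so $\tilde{V}(\sigma_h)_1^a=V^{\sigma_h}_1=\mathfrak{g}$. Since $\dim \tilde{V}(\sigma_h)_1>\dim \mathfrak{g}$, the induced automorphism $\bar a$ on the semisimple Lie algebra $\tilde{V}(\sigma_h)_1$ has order exactly $2$, with fixed-point subalgebra isomorphic to $\mathfrak{g}$.

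Third, I would combine Schellekens' list with the involution constraint to force $\tilde{V}(\sigma_h)_1\cong\mathfrak{f}$. Being the weight-one space of a strongly regular holomorphic VOA of central charge $24$, the Lie algebra $\tilde{V}(\sigma_h)_1$ is one of Schellekens' $71$ entries. Proposition \ref{sec:decomp1} applied to $\bar a$ arranges the simple ideals of $\tilde{V}(\sigma_h)_1$ into orbits of size $1$ or $2$ whose fixed-point subalgebra must reconstruct $\mathfrak{g}$ with the levels dictated by Table \ref{table1}; in particular the simple ideals of $\mathfrak{g}$ either arise as diagonally embedded copies of a pair of isomorphic swapped ideals of $\tilde{V}(\sigma_h)_1$ (the level in $\tilde{V}(\sigma_h)_1$ then being half of that in $\mathfrak{g}$), or as the fixed-point subalgebra under an involution of a single simple ideal of $\tilde{V}(\sigma_h)_1$ (governed by Proposition \ref{sec:classify1}). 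Matching the total dimension $\dim \mathfrak{f}$, the level data, and the list of $71$ Lie algebras should leave $\mathfrak{f}$ as the only possibility.

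The main obstacle is precisely this third step: the verification is combinatorial but not automatic, because Schellekens' list is large and in some cases distinct entries can share the same total dimension. The argument must therefore track not only dimensions but also the explicit simple types together with their levels, and use the rigidity of Propositions \ref{sec:classify1} and \ref{sec:theta} to rule out competing candidates in each of the thirteen cases of Table \ref{table1}.
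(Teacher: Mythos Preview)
Your proposal is correct and follows essentially the same strategy as the paper: compute $\dim\tilde{V}(\sigma_h)_1$ via \eqref{eqn:montague}, use the reverse-orbifold involution $a$ to get $\tilde{V}(\sigma_h)_1^a\cong\mathfrak{g}$, and then run a case-by-case analysis with Propositions~\ref{sec:classify1} and~\ref{sec:decomp1} to pin down $\tilde{V}(\sigma_h)_1\cong\mathfrak{f}$, finishing with Proposition~\ref{sec:Niemeier}. The only notable difference is that where you invoke Schellekens' list of $71$ Lie algebras as the pool of candidates, the paper instead works directly from the constraint $h_i^\vee/\ell_i=(\dim\tilde{V}(\sigma_h)_1-24)/24$ of Proposition~\ref{sec:hvee} to limit the possible simple ideals $\mathfrak{s}_{(i)}$; this makes the case analysis self-contained and avoids appealing to the full Schellekens classification, but the substance of the argument is the same.
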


\pf
From now on, we set $W=\tilde{V}(\sigma_h)$.
Then since the Lie algebra $W_1$ is semisimple by Proposition~\ref{sec:hvee},
we have the decomposition $W_1=\bigoplus_{i=1}^r\mathfrak{s}_{(i),\ell_i}$ of
$W_1$ into simple ideals,
where $r\in\Z_{> 0}$ and $\mathfrak{s}_{(i)}$ is a~simple ideal of $W_1$ with the level $\ell_i\in\Z_{>0}$ ($1\leq  i\leq r$).
Let $a=a_{V,\sigma_h}$ be the automorphism of $W$ defined in Subsection~\ref{reverse}, which is of order $2$.
It then follows from $\g^{\sigma_h}=\g$ that $W_1^a\cong \g$.
We  give a case by case analysis to show the assertion.

\noindent
{\bf (A)} Let $(\g,\k)=(B_{n,2}^{12/n}, A_{2n,1}^{12/n})$,
where $n|12$.
Since $\dim\g=12 (2 n+1)$, it follows by~\eqref{eqn:montague}
 that $W_1$ has dimension $48(n+1)$.
Let $i$ be an~element of $\{1,\ldots,r\}$.
Then, by Proposition~\ref{sec:hvee}, we have
$h_i^\vee/\ell_i=(\dim W_1-24)/24=2n+1$, where $h^\vee_i$ denotes the dual Coxeter number of $\mathfrak{s}_{(i)}$.
Since $\ell_i$ is a~positive integer, $h_i^\vee$ is divisible by $2n+1$.
We now prove the assertion for each $n$.

\noindent
{\bf (A.1) Case of $n=1$.}
Since $\g=A_{1,4}^{12}$, by applying Propositions~\ref{sec:decomp1}
and~\ref{sec:classify1} to $W_1$ and $a\in\mathrm{Aut}(W_1)$,
we see that $\s_{(i)}$ is of type $A_1$, $A_2$, $B_3$, $C_2$, $D_4$, $D_3$ or $G_2$.
As $3|h_i^\vee$, we have
$\mathfrak{s}_{(i)}\cong A_2,C_2$ or $D_4$.
Since $\dim\,\g=96$,
$W_1$ is of type $D_{4,2}^2C_{2,1}^4$, $D_{4,2}A_{2,1}C_{2,1}^6$, $D_{4,2}A_{2,1}^6C_{2,1}^2$, $A_{2,1}^2C_{2,1}^8$, $A_{2,1}^7C_{2,1}^4$ or $A_{2,1}^{12}$.
Since $\g=A_1^{12}$, it follows by Proposition~\ref{sec:classify1} that
$\s_{(i)}^a\cong A_1^4, A_1^2$ and $A_1$ if $\s_{(i)}\cong D_4$,
$C_2$ and $A_2$, respectively.
As the Lie rank of $\g$ is $12$, we have $W_1\cong A_{2,1}^{12}$.
It then follows by Proposition~\ref{sec:Niemeier}
that
$W$ is isomorphic to $V_{N(\k)}$.

\noindent
{\bf (A.2) Case of $n=2$.}
Similarly, by Propositions~\ref{sec:decomp1} and~\ref{sec:classify1}, we have $\s_{(i)}\cong B_2$, $A_4$, $C_4$ or $D_5$.
Since $5|h_i^\vee$,
it follows that  $\s_{(i)}\cong A_4$ or $C_4$.
Therefore,
$W_1=C_{4,1}^4$, $C_{4,1}^2A_{4,1}^3$ or $A_{4,1}^6$.
Since $\g=B_2^6$, we have $\s_{(i)}^a\cong B_2^2$ if $\s_{(i)}\cong C_4$
and $\s_{(i)}^a\cong B_2$ if $\s_{(i)}\cong A_4$.
By using $\mathrm{rank}(\g)=6$, we see that $\tilde{V}_1\cong A_{4,1}^{6}$.
As a~result,
$W$ is isomorphic to $V_{N(\k)}$.

\noindent
{\bf (A.3) Case of $n\geq 3$, $n|12$.}
Since $\g=B_n^{12/n}$, it follows by Propositions~\ref{sec:decomp1}
 and~\ref{sec:classify1} that
$\s_{(i)}=B_n$, $A_{2n}$ or $D_{2n+1}$.
As $(2n+1)|h_i^\vee$, we have $\s_{(i)}=A_{2n}$, which forces that
$W_1\cong A_{2n,1}^{12/n}$.
Hence, $W\cong V_{N(\k)}$.

By combining (A.1)--(A.3), we see that $W\cong V_{N(\k)}$
for each $n|12$.

\noindent
{\bf (B)} Let $(\g,\k)=(D_{2n,2}^{4/n}B_{n,1}^{8/n},A_{4n-1,1}^{4/n}D_{2n+1,1}^{4/n})$, where
$n|4$.
It follows by~\eqref{eqn:montague}
that $W_1$ has dimension $96n+24$.
Then
$h_i^\vee/\ell_i=(\dim W_1-24)/24=4n$.

\noindent
{\bf (B.1) Case of $n=1$.}
Since $4|h_i^\vee$ and $\dim\, W_1=120$, it follows that $\mathfrak{s}_{(i)}$ is of type
$A_3$, $A_7$, $C_3$, $C_7$, $D_5$, $D_7$, $E_6$ or $G_2$.
By applying Proposition~\ref{sec:classify1} to $W_1$ and $a$, we see that $\mathfrak{s}_{(i)}$ must be $A_3$ or $G_2$.
Since $\dim W_1=120$, it follows that $W_1\cong A_3^8$,
and hence, $W\cong V_{N(\k)}$.

\noindent
{\bf (B.2) Case of $n=2$.}
It follows that $\mathfrak{s}_{(i)}$ has the type
$A_7$, $C_7$, $D_5$, or $D_9$.
Since $\dim\,W_1=216$, we have
$W_1=A_{7,1}D_{9,2}$ or $A_{7,1}^2D_{5,1}^2$.
Suppose that $W_1=A_{7,1}D_{9,2}$. It then follows that $A_7^a=D_4$.
Therefore, we have $D_9^a=D_4B_2^2$, which contradicts Proposition~\ref{sec:classify1}.
Hence,  $W_1$ must have the type $A_{7,1}^2D_{5,1}^2$.
As a result,
$W$ is isomorphic to $V_{N(\k)}$.

\noindent
{\bf (B.3) Case of $n=4$.}
Then $\mathfrak{s}_{(i)}$ has the type $A_{15}$ or $D_9$, which shows that
$W_1=A_{15,1}D_{9,1}$. As a result,
$W$ is isomorphic to $V_{N(\k)}$.

\noindent
{\bf (C)} Let $(\g,\k)=(D_{2n+1,2}^{4/n}A_{2n-1,1}^{4/n}, A_{4n+1,1}^{4/n}X_{n,1})$,
 where $n|4$.
Since $\dim\g=24 (2 n+1)$, it follows by~\eqref{eqn:montague}
that $W_1$  has dimension $24(4n+3)$.
Then we have
$h_i^\vee/\ell_i=(\dim W_1-24)/24=4n+2$.

\noindent
{\bf (C.1) Case of $n=1$.}
Since $\dim(W_1) =168$
and $h_i^\vee/k_i=6$,
it follows that $\mathfrak{s}_{(i)}$ has the type $A_{11}$, $D_7$, $E_6$, $C_5$, $A_5$, $D_{4}$, or $E_7$.
Since $\g=D_3^4A_1^4$, it follows by Proposition~\ref{sec:classify1} that
$\s_{(i)}\cong A_5$ or $D_4$, which forces that
$W_1=A_{5,1}^4D_{4,1}$ or $D_{4,1}^6$.
Since $D_3\subset \g$, we see that $A_5\subset W$, and hence
$W_1$ has the type $A_{5,1}^4D_{4,1}$.
Therefore,
$W\cong V_{N(\k)}$.

\noindent
{\bf (C.2) Case of $n=2$.}
We see that $\mathfrak{s}_{(i)}$ has the type
$A_9$, $C_9$, $D_6$, $D_{11}$ or $E_8$, which shows that
$W_1=D_{6,1}^4$ or $A_{9,1}^2D_{6,1}$.
Since $D_5\subset \g$, we have $W_1\cong A_{9,1}^2D_{6,1}$,
and hence,
$W\cong V_{N(\k)}$.

\noindent
{\bf (C.3) Case of $n=4$.}
Then $\mathfrak{s}_{(i)}$ has the type one of
$A_{17}$, $D_{10}$, or $E_7$, which forces that
$W_1=A_{17,1}E_{7,1}$ or $D_{10,1}E_{7,1}^2$.
 Since $\g=D_9A_7$, it follows by Proposition~\ref{sec:classify1} that
 the multiplicity of the ideal $E_7$ in
$W_1$ is less than $2$.
Therefore, $W_1$ has the type $A_{17,1}E_{7,1}$. As a result,
$W$ is isomorphic to $V_{N(\k)}$.

\noindent
{\bf (D)} Let $(\mathfrak{g},\mathfrak{f})=(C_{4,1}^{4}, E_{6,1}^{4})$.
By \eqref{eqn:montague}, we know that $\dim\,W_1=312$
and $h_i^\vee/\ell_i=12$.
It follows that $\mathfrak{s}_{(i)}$ has the type
$C_{11}$, $A_{11}$, $D_7$ or $E_6$, which shows that
$W_1=A_{11,1}D_{7,1}E_{6,1}$ or $E_{6,1}^{4}$.
Since  $A_{11}^a$ is isomorphic to $D_6$ or $C_6$, it follows by Proposition \ref{sec:classify1} and Corollary \ref{sec:decomp2} that $W_1\not\cong A_{11,1}D_{7,1}E_{6,1}$. Thus, $W_1$ has the type $E_{6,1}^4$.
Hence,
$W$ is isomorphic to $V_{N(\k)}$.

\noindent
{\bf (E)} Let $(\mathfrak{g},\mathfrak{f})=(D_{6,2}B_{3,1}^2C_{4,1}, A_{11,1}D_{7,1}E_{6,1})$. By the same argument, we have $\dim(W_1)=312$ and $h_i^\vee/\ell_i=(\dim W_1-24)/24=12$.
It follows that $\mathfrak{s}_{(i)}$ has the type
$A_{11}$, $C_{11}$, $D_7$ or $E_6$, which forces that
$W_1=E_{6,1}^4$ or $A_{11,1}D_{7,1} E_{6,1}$.
Since $\g=D_6B_3^2C_4$, it follows by Proposition~\ref{sec:classify1} that the multiplicity of the ideal $E_6$ in $W_1$ is less than $2$.
Thus, $W_1$ has the type $A_{11,1}D_{7,1} E_{6,1}$, and thus,
$W$ is isomorphic to $V_{N(\k)}$.
\qed


\smallskip

To summarize, we have proved that  there exists a semisimple element $h\in V_1$ such that{\rm:}
{\rm(1)} $\sigma_h$ is an automorphism of $V$ of order $2$,
and the pair $(V,\sigma_h)$ satisfies the orbifold condition{\rm;}
{\rm(2)} The holomorphic VOA $\tilde{V}(\sigma_h)=V^{\sigma_h}\oplus V^{\mathrm{T}}(\sigma_h)_{\Z}$ is isomorphic to $V_{N(\k)}${\rm;}
{\rm(3)} $\mathfrak{g}^{\sigma_h}=\g$. Taking $U=V_{N(\k)}$, we can obtain  Theorem~\ref{mainresult} by Theorems \ref{sec:unique1}, \ref{main2}.

\subsection*{Acknowledgements}
The authors would like to thank Hiroki Shimakura for helpful discussions.
Some computations are done by using the computer algebra system Sage.

\end{document}